\documentclass[11pt]{amsart}
\usepackage{amssymb}

\usepackage{palatino}
\input amssym.def

\usepackage{amsmath, amsfonts}
\usepackage{amssymb}
\usepackage{amscd}
\usepackage[mathscr]{eucal}
\usepackage{palatino}
\setlength{\voffset}{-1cm} \setlength{\hoffset}{-1.37cm}
\setlength{\textwidth}{6.3in} \setlength{\textheight}{8.4in}

\setlength{\abovedisplayshortskip}{3mm}
\setlength{\belowdisplayshortskip}{3mm}

\newfont{\cyrr}{wncyr10}

\newcommand{\p}{{\mathfrak P}}
\newcommand{\N}{{\mathbb N}}

\newcommand{\Q}{{\mathbb Q}}
\newcommand{\R}{{\mathbb R}}
\newcommand{\C}{{\mathbb C}}
\newcommand{\B}{{\mathfrak B}}

\newcommand{\thmref}[1]{Theorem~\ref{#1}}
\newcommand{\propref}[1]{Proposition~\ref{#1}}

\newcommand{\tab}{\qquad }

\newtheorem{thm}{Theorem}

\newtheorem{lem}[thm]{Lemma}
\newtheorem{cor}[thm]{Corollary}
\newtheorem{prop}[thm]{Proposition}
\newtheorem{rmk}{Remark}[section] 
\newtheorem{defn}{Definition}

\parindent=0.5cm
\footskip=0.85cm

\begin{document}

\title[Simultaneous Sign Change and Non-vanishing]{The first 
simultaneous sign change and
non-vanishing of Hecke eigenvalues of newforms}

\author{Sanoli Gun, Balesh Kumar and Biplab Paul}

\address[Sanoli Gun, Balesh Kumar and Biplab Paul]  
{Institute of Mathematical Sciences, 
Homi Bhabha National Institute, 
C.I.T Campus, Taramani, 
Chennai  600 113, India.}
\email{sanoli@imsc.res.in}
\email{baleshk@imsc.res.in}
\email{biplabpaul@imsc.res.in}

\subjclass[2010]{11F30, 11F11}

\keywords{newforms, first simultaneous sign change, 
simultaneous non-vanishing, Rankin-Selberg method,  
$\B$-free numbers}

\begin{abstract} 
Let $f$ and $g$ be two distinct newforms
which are normalized Hecke eigenforms
of weights $k_1, k_2 \ge 2$ 
and levels $N_1, N_2 \ge 1$ respectively.  
Also let $a_f(n)$ and $a_g(n)$
be the $n$-th Fourier-coefficients of $f$ and $g$ 
respectively. In this article, we investigate the first 
sign change of the sequence 
$\{a_f(p^{\alpha})a_g(p^{\alpha}) \}_{p^{\alpha} \in \N, \alpha \le 2}$,
where $p$ is a prime number.
We further study the non-vanishing of the sequence
$\{a_f(n)a_g(n) \}_{n \in \N}$ and derive 
bounds for first non-vanishing term in this sequence.
We also show, using ideas of Kowalski-Robert-Wu 
and Murty-Murty,
that there exists a set of primes  $S$ of natural density one
such that for any prime $p \in S$, the
sequence $\{a_f(p^n)a_g(p^m) \}_{n,m \in \N}$
has no zero elements. This improves
a recent work of Kumari and Ram Murty. Finally, using
$\B$-free numbers, we investigate simultaneous non-vanishing
of coefficients of $m$-th symmetric power $L$-functions 
of non-CM forms in short intervals.
\end{abstract}

\maketitle

\section{Introduction}
For positive integers $k \ge 2, N \ge 1$, let
$S_k(N)$ be the space of cusp forms of weight $k$ for the 
congruence subgroup $\Gamma_0(N)$ and 
$S_k^{new}(N)$ be the subspace of $S_k(N)$ 
consisting of newforms. We investigate arithmetic
properties of Fourier-coefficients of $f \in S_k^{new}(N)$
which are normalized Hecke eigenforms. This question
has been studied extensively by several mathematicians.
In recent works, Kowalski, Lau, Soundararajan and Wu \cite{KLSW} 
and later Matom\"aki \cite{M} showed that any 
$f \in S_k^{new}(N)$ which is a normalized Hecke eigenform 
is uniquely determined by the signs of their
Hecke eigenvalues at primes. In this article, we 
investigate simultaneous sign change and non-vanishing 
of Hecke eigenvalues of such forms. 
More precisely, for $z \in \mathcal{H}:=\{z\in \C : \text{Im}(z)>0\}, 
~q:= e^{2\pi i z}$, let
\begin{equation}\label{forms}
f(z) =\sum_{n=1}^\infty a_f(n)q^n \in S_{k_1}^{new}(N_1)
\phantom{m}\text{ and }\phantom{m}
g(z) =\sum_{n=1}^\infty a_g(n)q^n \in S_{k_2}^{new}(N_2)
\end{equation}
be two newforms which are normalized Hecke eigenforms. 
Here we study first sign change 
and non-vanishing of the sequence $\{a_f(n)a_g(n)\}_{n \in \N}$.

The question of simultaneous sign change for arbitrary cusp
forms was first studied
by Kohnen and Sengupta \cite{KS} under certain conditions
which were later removed by the first author, Kohnen and 
Rath \cite{GKR}. In the later paper, the authors 
prove infinitely many sign change of the sequence 
$\{a_f(n)a_g(n)\}_{n\in\N}$. Here we prove the
following theorem.

\begin{thm}\label{th1}
Let $N_1, N_2$ be square-free, $N:=\text{lcm}[N_1,N_2]$
and $f \in S_{k_1}^{new}(N_1), ~g \in S_{k_2}^{new}(N_2)$ be 
two distinct normalized Hecke eigenforms with Fourier expansions as
in \eqref{forms}. Then there exists a prime power
$p^{\alpha}$ with $\alpha \le 2$ and
$$
p^{\alpha} \ll_\epsilon 
\text{max}
\left\{exp{(c \log^2 (\sqrt{\mathfrak{q}(f)} + \sqrt{\mathfrak{q}(g)} ))}, 
\left[ N^2
\left(1+\frac{k_2-k_1}{2}\right)
\left(\frac{k_1+k_2}{2}\right) \right]^{1+\epsilon} \right\}
$$
such that $a_f(p^{\alpha})a_g(p^{\alpha})<0$.
Here $c >0$ is an absolute constant and
$\mathfrak{q}(f), ~\mathfrak{q}(g)$
are analytic conductors
of Rankin-Selberg $L$-functions of $f$ and $g$ respectively.
\end{thm}

We use Rankin-Selberg method
and an idea of Iwaniec, Kohnen and Sengupta \cite{IKS} 
to prove \thmref{th1}.  This theorem
can be thought of as a variant 
of Strum's result about distinguishing two
newforms by their Fourier-coefficients.  
This result can be compared with the
results of Lau-Liu-Wu \cite{LLW}, Kohnen~\cite{K}, 
Kowalski-Michel-Vanderkam \cite{KMV},
Ram Murty~\cite{RM2} and
Sengupta~\cite{JS}.

Next we investigate sign changes  of
the sequence $\{a_f(n)a_g(n^2)\}_{n \in \N}$ in short 
intervals. This question of sign change 
for the sequence $\{a_f(n)a_g(n)\}_{n \in \N}$
in short intervals was considered by Kumari and 
Ram Murty (see \cite[Theorem 1.6]{KM}). 
Here we prove the following.

\begin{thm}\label{th_2}
Let 
$$
f(z) =\sum_{n=1}^\infty a_f(n)q^n \in S_{k_1}^{new}(N_1)
\phantom{m}\text{ and }\phantom{m}
g(z) =\sum_{n=1}^\infty a_g(n)q^n \in S_{k_2}^{new}(N_2)
$$ 
be two distinct normalized Hecke eigenforms. 
For any sufficiently large $x$ and any $\delta>\frac{17}{18}$, 
the sequence $\{a_f(n)a_g(n^2)\}_{n \in \N}$
has at least one sign change in $(x, x + x^{\delta}]$. 
In particular, the number of sign 
changes for $n\leq x$ is $\gg x^{1-\delta}$.
\end{thm}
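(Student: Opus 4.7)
My plan is to prove the sign change in the short interval $(x, x+y]$ with $y := x^\delta$ by comparing the signed interval sum
$$
S(x,y) := \sum_{x < n \le x+y} a_f(n)\, a_g(n^2)
$$
with the corresponding sum of absolute values
$$
T(x,y) := \sum_{x < n \le x+y} |a_f(n)\, a_g(n^2)|.
$$
If no sign change occurs in $(x, x+y]$ then $|S(x,y)| = T(x,y)$, so it suffices to prove the strict inequality $|S(x,y)| < T(x,y)$. The lower bound $\gg x^{1-\delta}$ on the count of sign changes for $n \le x$ then follows by covering $[1, x]$ with $\gg x^{1-\delta}$ disjoint intervals of length $x^{\delta}$ and applying the single-interval result to each.

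For the upper bound on $|S(x,y)|$, I would employ the M\"obius identity
$$
a_g(n^2) \;=\; \sum_{d^2 \mid n} \mu(d)\, d^{\,k_2-1}\, a_{\mathrm{sym}^2 g}(n/d),
$$
(which follows by matching local Euler factors) to reduce the short-interval analysis of $a_f(n) a_g(n^2)$ to that of the coefficients of the Rankin-Selberg convolution $L(s, f \otimes \mathrm{sym}^2 g)$. Since $f$ lives on $GL(2)$ while $\mathrm{sym}^2 g$ lives on $GL(3)$, they cannot be isomorphic, so $L(s, f \otimes \mathrm{sym}^2 g)$ is entire under the hypothesis $f \ne g$ (the CM case, in which $\mathrm{sym}^2 g$ is Eisenstein, is handled by decomposing the convolution into entire lower-degree $L$-functions). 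A truncated Perron formula, a contour shift to the critical line, and the convexity bound on this degree-$6$ $L$-function (together with the easy $d$-sum) then yield $|S(x,y)| \ll_\epsilon x^{\theta + \epsilon}$ for some exponent $\theta \le 17/18$ after balancing the truncation height against the $L$-function growth.

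To lower-bound $T(x,y)$, I would apply H\"older's inequality in the form
$$
T(x,y) \;\ge\; \frac{\bigl(\sum_{x < n \le x+y} |a_f(n) a_g(n^2)|^2\bigr)^{3/2}}{\bigl(\sum_{x < n \le x+y} |a_f(n) a_g(n^2)|^4\bigr)^{1/2}}.
$$
The fourth-moment denominator is controlled trivially by Deligne's bound, contributing $\ll_\epsilon y\, x^{2(k_1 + 2k_2 - 3) + \epsilon}$. The second-moment numerator is computed by the Rankin-Selberg method: the normalised series $\sum_n |\lambda_f(n)|^2 |\lambda_g(n^2)|^2 n^{-s}$ factors into automorphic $L$-functions whose simple pole at $s = 1$ produces a main term and, by differencing, a short-interval asymptotic $\sim c\, y\, x^{k_1 + 2k_2 - 3}$ valid for $y \gg x^{17/18 + \epsilon}$. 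Substituting into H\"older yields $T(x,y) \gg y\, x^{(k_1 + 2k_2 - 3)/2 - \epsilon}$, which strictly dominates $|S(x,y)|$ precisely when $\delta > 17/18$. The main obstacle will be establishing the short-interval second-moment asymptotic uniformly down to $y = x^{17/18 + \epsilon}$: this requires a sufficiently sharp power-saving error term in the full Rankin-Selberg asymptotic, and it is the quality of available convex/subconvex bounds on the high-degree Rankin-Selberg $L$-function governing $|a_f(n) a_g(n^2)|^2$ that ultimately pins down the threshold $17/18$.
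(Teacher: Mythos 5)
Your approach is logically sound and runs parallel to the paper's, but the paper packages the argument more efficiently. The paper formulates a general sign-change lemma (Lemma \ref{signchangelemma}): if $|a_nb_n| \ll n^{\alpha_1+\alpha_2}$, $\sum_{n\le x}a_nb_n \ll x^\beta$, and $\sum_{n\le x}a_n^2b_n^2 = cx + O(x^\gamma)$, then there is a sign change in every $(x, x+x^r]$ with $r > \max\{\alpha_1+\alpha_2+\beta, \gamma\}$. The proof is a one-liner: if all terms had one sign in $(x,x+x^r]$, then
$$
x^r \ll \sum_{x < n \le x+x^r} a_n^2 b_n^2 \ll x^{\alpha_1+\alpha_2}\Bigl|\sum_{x < n \le x+x^r} a_n b_n\Bigr| \ll x^{\alpha_1+\alpha_2+\beta},
$$
a contradiction. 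Your H\"older step and the fourth-moment estimate are an unnecessary detour: once you assume no sign change, the Deligne bound $|\lambda_f(n)\lambda_g(n^2)| \ll n^\epsilon$ already gives $\sum|\lambda_f(n)\lambda_g(n^2)|^2 \ll x^\epsilon\,|\sum\lambda_f(n)\lambda_g(n^2)|$ directly, so there is no need to lower-bound $T(x,y)$ via a fourth moment. The paper then simply quotes L\"u \cite{L} for the two analytic inputs: $\sum_{n\le x}\lambda_f(n)\lambda_g(n^2) \ll x^{5/7}(\log x)^{-\theta/2}$ (the $GL(2)\times GL(3)$ first moment, which is what your $L(s, f\otimes\mathrm{sym}^2 g)$ reduction yields, with a sharper exponent than the $17/18$ you tentatively propose) and $\sum_{n\le x}\lambda_f(n)^2\lambda_g(n^2)^2 = cx + O(x^{17/18+\epsilon})$, whose error term is indeed what pins the threshold $\delta > 17/18$, as you correctly identify at the end. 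Your sketch of deriving these from scratch is plausible but incomplete --- in particular your M\"obius identity for $a_g(n^2)$ should read $a_g(n^2) = \sum_{d^2\mid n}\mu(d)\,d^{2(k_2-1)}a_{\mathrm{sym}^2 g}(n/d^2)$ rather than $\sum_{d^2\mid n}\mu(d)d^{k_2-1}a_{\mathrm{sym}^2 g}(n/d)$; this matches the Euler-product factorisation $\sum_n\lambda_g(n^2)n^{-s} = L(\mathrm{sym}^2 g, s)/\zeta(2s)$. In short: same skeleton, same bottleneck, but the paper gets there with Deligne plus a citation to L\"u, whereas you redo the analytic inputs from scratch and add an avoidable H\"older step.
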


Sign changes of Hecke eigenvalues implies 
non-vanishing of Hecke eigenvalues. The question of
non-vanishing of Hecke eigenvalues has been
studied by several mathematicians. 
One of the fundamental open problem in this area is a question of 
Lehmer which predicts that $\tau(n)\neq 0$ for all $n\in\N$, 
where $\tau(n)$ is the Ramanujan's $\tau$-function defined as 
follows;
$$
\Delta(z)
:=\sum_{n=1}^\infty \tau(n)q^n
:=q\prod_{n=1}^\infty (1-q^n)^{24}.
$$
It is well known that $\Delta(z) \in S_{12}(1)$ is 
the unique normalized Hecke eigenform. We now
investigate non-vanishing of the sequence
$\{a_f(p^m)a_g(p^m)\}_{m \in \N}$ and our first 
theorem in this direction is the following.
\begin{thm}\label{th_3}
Let 
$$
f(z) =\sum_{n=1}^\infty a_f(n)q^n \in S_{k_1}^{new}(N_1)
\phantom{m}\text{ and }\phantom{m}
g(z) =\sum_{n=1}^\infty a_g(n)q^n \in S_{k_2}^{new}(N_2)
$$ 
be two distinct normalized Hecke eigenforms. 
Then for all primes $p$ with $(p, N_1N_2)=1$, the set 
\begin{equation}\label{densityofset}
\{m\in \N ~|~  a_f(p^m)a_g(p^m)\ne 0 \}
\end{equation}
has positive density. 
\end{thm}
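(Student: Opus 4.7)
The plan is to analyze the zero pattern of $\{a_f(p^m)\}$ and $\{a_g(p^m)\}$ separately via Satake parameters and then combine them by inclusion--exclusion. Fix a prime $p$. If $p \nmid N_1$, write the normalized Satake parameters of $f$ at $p$ as $e^{\pm i\theta_f}$ with $\theta_f \in [0,\pi]$, so that Hecke multiplicativity yields the classical closed form
\[
a_f(p^m) \;=\; p^{m(k_1-1)/2}\,\frac{\sin((m+1)\theta_f)}{\sin\theta_f},
\]
interpreted by continuity at $\theta_f \in \{0,\pi\}$ as $(m+1)(\pm 1)^m p^{m(k_1-1)/2}$, which never vanishes. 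For $p \mid N_1$ (and $N_1$ square-free), Atkin--Lehner theory gives $a_f(p^m) = a_f(p)^m$ with $|a_f(p)| = p^{(k_1-2)/2} \neq 0$, so the sequence never vanishes. Analogous statements apply to $g$.

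Reading off, the zero set $Z_f := \{m \in \N : a_f(p^m) = 0\}$ is empty unless $p \nmid N_1$ and $\theta_f/\pi$ is rational, say $\theta_f/\pi = a_f/b_f$ in lowest terms; since $0 < \theta_f < \pi$ forces $b_f \geq 2$, in that case $Z_f = \{m : b_f \mid m+1\}$ has density $1/b_f \leq 1/2$. Similarly $Z_g$ is either empty or an arithmetic progression of density $1/b_g \leq 1/2$. When both are nonempty, inclusion--exclusion gives
\[
\mathrm{density}(Z_f \cup Z_g) \;=\; \frac{1}{b_f} + \frac{1}{b_g} - \frac{1}{\mathrm{lcm}(b_f, b_g)} \;=\; \frac{b_f + b_g - \gcd(b_f, b_g)}{b_f b_g},
\]
and the elementary bound $b_f b_g - b_f - b_g + \gcd(b_f, b_g) \geq (b_f-1)(b_g-1) \geq 1$ (using $b_f, b_g \geq 2$) forces this density to be strictly less than $1$. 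Hence $\{m : a_f(p^m) a_g(p^m) \neq 0\}$ has density at least $1/(b_f b_g) > 0$; when one or both of $Z_f, Z_g$ is empty, the complement has density $\geq 1/2$.

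There is no serious obstacle once Deligne's bound is invoked; the argument is essentially the one used by Murty--Murty and Kowalski--Robert--Wu for the single-form case, adapted to a product. The only points requiring care are the degenerate Satake angles $\theta_f \in \{0,\pi\}$ (a removable singularity; the sequence is manifestly nonvanishing), the ramified primes $p \mid N_1 N_2$ (controlled by the square-free level hypothesis), and the worst combinatorial case $b_f = b_g = 2$, where $Z_f = Z_g = \{m \text{ odd}\}$ and the product vanishes on a set of density exactly $1/2$ -- still leaving a positive-density set of nonvanishing indices, as required.
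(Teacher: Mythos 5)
Your proof is correct and follows essentially the same route as the paper: write $\lambda_f(p^m)$, $\lambda_g(p^m)$ via Satake angles as $\sin((m+1)\theta)/\sin\theta$ (with the degenerate $\theta\in\{0,\pi\}$ cases handled separately), observe that the zero set of each factor is either empty or the arithmetic progression $\{m : b \mid m+1\}$ according to whether $\theta/\pi$ is irrational or equal to $a/b$ in lowest terms (with $b\geq 2$), and conclude that the simultaneous-vanishing set has density strictly less than $1$. The paper does this by an explicit four-way case analysis on whether $\alpha_p,\beta_p$ lie in $\{0,\pi\}$ or in $(0,\pi)$, are equal or distinct, and have rational or irrational $\theta/\pi$; you instead package the final step as a single inclusion--exclusion bound
\[
1 - \mathrm{density}(Z_f\cup Z_g) \;=\; \frac{b_f b_g - b_f - b_g + \gcd(b_f,b_g)}{b_f b_g}\;\geq\;\frac{1}{b_f b_g}\;>\;0,
\]
which is cleaner and automatically covers the worst case $b_f=b_g=2$ (a point the paper reaches by noting that if $\alpha_p\neq\beta_p$ then $s_1=s_2=2$ is impossible, so the cruder bound $[x/s_1]+[x/s_2]$ suffices, while $\alpha_p=\beta_p=\pi/2$ is swept into Case~3).

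One caveat, which applies to the paper as well: the theorem as stated imposes no square-freeness on $N_1,N_2$ and claims the conclusion for \emph{all} primes $p$, yet the paper's proof silently restricts to $(p,N_1N_2)=1$. You go further than the paper by treating $p\mid N_1$ via Atkin--Lehner, using $|a_f(p)|=p^{(k_1-2)/2}\neq 0$, but this requires $p\,\|\,N_1$. If $p^2\mid N_1$ then $a_f(p)=0$ and hence $a_f(p^m)=a_f(p)^m=0$ for all $m\geq 1$, making the set in question empty; so the theorem is false as literally stated without a square-free or coprimality restriction. You flag this correctly (``controlled by the square-free level hypothesis''), but that hypothesis is absent from the theorem you were asked to prove, so the clause handling ramified primes is conditional on a hypothesis you added rather than one that is given. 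This is a defect in the theorem's statement rather than in your argument, and your treatment of the unramified primes --- which is all the paper addresses --- is sound.
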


The first author along with Kohnen and Rath 
(see Theorem 3 of \cite{GKR})
showed that   
for infinitely many primes $p$, the sequence
$\mathcal{A}_p :=\{a_f(p^m)a_g(p^m)\}_{ m \in \N}$
has infinitely many sign changes and hence
in particular, $\mathcal{A}_p$ has infinitely many 
non-zero elements. 
\thmref{th_3} shows that for all primes $p$ with
$(p, N_1N_2)=1$, the non-zero elements of the
sequence $\mathcal{A}_p$ has positive density
and hence does not follow from Theorem~3 of~\cite{GKR}.
Our next theorem strengthens Theorem 1.2 of 
Kumari and Ram Murty \cite{KM}. 

\begin{thm}\label{th_4}
Let 
$$
f(z) =\sum_{n=1}^\infty a_f(n)q^n  \in S_{k_1}^{new}(N_1)
\phantom{m}\text{ and }\phantom{m}
g(z) =\sum_{n=1}^\infty a_g(n)q^n \in S_{k_2}^{new}(N_2)
$$ 
be two distinct normalized non-CM Hecke eigenforms. 
Then there exists a set $S$ of 
primes with natural density one such that for any $p \in S$ and 
integers $m, m'  \ge 1$, we have 
$$
a_f(p^{m})a_g(p^{m'}) \neq 0.
$$ 
\end{thm}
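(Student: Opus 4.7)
The plan is to reduce the non-vanishing claim to an equidistribution statement about Sato--Tate angles. For any prime $p \nmid N_1 N_2$, Deligne's bound lets us write $a_f(p) = 2 p^{(k_1-1)/2}\cos\theta_{f,p}$ for some $\theta_{f,p} \in [0,\pi]$, and the Hecke recursion at prime powers gives
$$
a_f(p^m) \;=\; p^{m(k_1-1)/2}\,
\frac{\sin\bigl((m+1)\theta_{f,p}\bigr)}{\sin\theta_{f,p}}.
$$
Hence $a_f(p^m)=0$ for some $m \ge 1$ if and only if $\theta_{f,p}/\pi \in \Q \cap (0,1)$. Define $T_f := \{p \nmid N_1 : \theta_{f,p}/\pi \in \Q \cap (0,1)\}$ and $T_g$ analogously, and let $S$ be the complement, in the set of primes, of $T_f \cup T_g \cup \{p : p \mid N_1 N_2\}$. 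The theorem will follow once I prove that $T_f$ and $T_g$ each have natural density zero.

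To analyse $T_f$, let $K_f$ denote the (finite) number field generated by all Hecke eigenvalues of $f$. If $p \in T_f$ and $\theta_{f,p}/\pi = a/b$ in lowest terms with $b \ge 2$, then $2\cos(\pi a/b) = a_f(p)/p^{(k_1-1)/2}$ lies in $K_f$, so the maximal real subfield $\Q(\zeta_{2b}+\zeta_{2b}^{-1})$ of $\Q(\zeta_{2b})$ embeds into $K_f$. This forces $\phi(2b)/2 \le [K_f : \Q]$, so only finitely many denominators $b$ can occur. Consequently $T_f$ decomposes as a finite union $T_f = \bigcup_{a/b} T_f(a/b)$, where $T_f(a/b) := \{p : \theta_{f,p} = \pi a/b\}$ and $a/b$ ranges over a finite set of rationals in $(0,1)$.

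It therefore suffices to show each $T_f(a/b)$ has density zero. The condition $\theta_{f,p} = \pi a/b$ is equivalent to $a_f(p) = \alpha\, p^{(k_1-1)/2}$ for the fixed totally real algebraic integer $\alpha := 2\cos(\pi a/b) \in K_f$. Since $f$ has no complex multiplication, the image of the associated $\ell$-adic Galois representation is open in $\mathrm{GL}_2(\Z_\ell)$ by the work of Ribet and Momose, so pinning the Frobenius trace to this Zariski-closed condition of positive codimension gives Chebotarev density zero; in quantitative form, the Kowalski--Robert--Wu estimates combined with the method of Murty--Murty yield a power-saving bound $\#\{p \le x : a_f(p) = \alpha\, p^{(k_1-1)/2}\} = o(\pi(x))$. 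A finite union of density-zero sets has density zero, whence $T_f$, and similarly $T_g$, has natural density zero. Then $S$ has density one, and for every $p \in S$ and every $m, m' \ge 1$, both $a_f(p^m) \ne 0$ and $a_g(p^{m'}) \ne 0$, as required.

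The main obstacle is locating the correct quantitative form of Kowalski--Robert--Wu that handles an \emph{arbitrary} prescribed value $\alpha\, p^{(k_1-1)/2}$ of $a_f(p)$, rather than just the degenerate case $\alpha = 0$ treated in the vanishing literature. However, because only finitely many exceptional values $\alpha$ survive the cyclotomic constraint $\phi(2b)/2 \le [K_f : \Q]$, even a purely qualitative density-zero statement for each fixed $\alpha$ suffices; all remaining ingredients are the Chebyshev identity above and the elementary degree bound on the real cyclotomic subfield.
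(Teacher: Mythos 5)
Your proposal is correct in outline, and it is in essence the \emph{same} strategy as the paper: reduce to showing that the set $P_f:=\{p : \lambda_f(p^\nu)=0 \text{ for some } \nu\ge 1\}$ has density zero (and likewise $P_g$), then take a union bound and apply the prime number theorem. The paper does this in three lines by invoking Lemma~\ref{lem6} (Kowalski--Robert--Wu, Lemma~2.3; Murty--Murty, Lemma~2.5) as a black box. What you have done is unpack the standard proof of that lemma --- the cyclotomic degree bound to reduce to finitely many exceptional Sato--Tate angles $\pi a/b$, followed by an open-image (Ribet--Momose) Chebotarev argument giving density zero for each fixed angle --- and you correctly observe that the qualitative density-zero statement already suffices. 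So there is no methodological divergence, only a difference in how much of the cited lemma is spelled out.

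There is, however, one technical slip worth fixing. You claim that if $\theta_{f,p}/\pi = a/b$ in lowest terms, then $2\cos(\pi a/b) = a_f(p)/p^{(k_1-1)/2} = \lambda_f(p)$ lies in the Hecke field $K_f$. This is true when $k_1$ is odd (since then $p^{(k_1-1)/2}\in\Z$), but when $k_1$ is even $p^{(k_1-1)/2}$ is an integer times $\sqrt p$, so $\lambda_f(p)$ lives only in $K_f(\sqrt p)$ and the asserted inclusion $\Q(\zeta_{2b}+\zeta_{2b}^{-1})\subseteq K_f$ need not follow. The repair is to square: $\lambda_f(p)^2 = a_f(p)^2/p^{k_1-1}\in K_f$ always, hence $4\cos^2(\pi a/b)\in K_f$, hence $\cos(2\pi a/b)\in K_f$, and the degree of $\cos(2\pi a/b)$ over $\Q$ is still $\gg\phi(b')$ for $b'\mid b$ with $b'\ge b/2$; this again caps $b$ in terms of $[K_f:\Q]$ and recovers the finite decomposition $T_f=\bigcup T_f(a/b)$. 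With that correction, and granting the (standard, but not spelled out) passage from ``Haar-measure-zero closed condition on an open Galois image'' to ``density zero of primes'' via Chebotarev applied to each finite quotient $\bmod\ \ell^k$, the argument is sound.

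Finally, a small point of wording: you write that Kowalski--Robert--Wu combined with Murty--Murty gives a ``power-saving bound.'' It does not; the bound in Lemma~\ref{lem6} is $\ll x/(\log x)^{1+\delta}$, a logarithmic saving over $\pi(x)$, which is all the theorem requires.
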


Now we shall consider the question of the first 
simultaneous non-vanishing which is analogous 
to the question considered 
in Theorem 1. Our result here is as follows.

\begin{thm}\label{th_5}
Let 
$$
f(z) =\sum_{n=1}^\infty a_f(n)q^n \in S_{k_1}(N_1)
\phantom{m}\text{ and }\phantom{m}
g(z) =\sum_{n=1}^\infty a_g(n)q^n \in S_{k_2}(N_2)
$$ 
be two distinct normalized Hecke eigenforms.
Further assume that $N:=\text{ lcm }[N_1,N_2] > 12$.
Then there exists a positive integer    
$1< n \le (2\log N)^{4}$ with $(n, N)=1$
such that 
$$
a_{f}(n)a_{g}(n) \neq 0. 
$$
Further, when $N$ is odd, then
there exists an integer $1 < n \le 16$ with $(n, N)=1$
such that 
$$
a_{f}(n)a_{g}(n) \neq 0. 
$$
\end{thm}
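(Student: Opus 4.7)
The plan is to exhibit $n$ of the form $p^m$, with $p$ a small prime (at most $2\log N$) and $m \in \{1, 2, 4\}$, so that $n \le (2 \log N)^4$. Since $f$ and $g$ are normalized Hecke eigenforms, each is actually a newform of some level $M_1 \mid N_1$ and $M_2 \mid N_2$. Call a prime $p$ \emph{admissible} if $p^2 \nmid M_1$ and $p^2 \nmid M_2$; for such $p$ and each $i \in \{1, 2\}$, the coefficients at $p$-powers are governed either by the Hecke recurrence $a(p^{j+1}) = a(p) a(p^j) - p^{k_i - 1} a(p^{j-1})$ (when $p \nmid M_i$) or by $a(p^j) = a(p)^j$ with $a(p) \ne 0$ (when $p \parallel M_i$).

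The central claim is that for every admissible prime $p$, at least one of $m = 1, 2, 4$ yields $a_f(p^m) a_g(p^m) \ne 0$, giving $n = p^m \le p^4 \le (2\log N)^4$. This is a short case analysis. If $a_f(p) a_g(p) \ne 0$, take $m = 1$. If $a_f(p) = a_g(p) = 0$ (which forces $p \nmid M_1 M_2$), the recurrence yields $a_f(p^2) = -p^{k_1 - 1}$ and $a_g(p^2) = -p^{k_2 - 1}$, both nonzero, so $m = 2$ works. Finally, suppose exactly one of $a_f(p), a_g(p)$ vanishes; by symmetry we may assume $a_f(p) = 0$ and $a_g(p) \ne 0$. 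Then $a_f(p^2) = -p^{k_1 - 1} \ne 0$; if $a_g(p^2) \ne 0$, take $m = 2$. Otherwise $a_g(p)^2 = p^{k_2 - 1}$, and iterating the recurrence twice more gives $a_g(p^4) = -p^{2(k_2 - 1)}$ and $a_f(p^4) = p^{2(k_1 - 1)}$, both nonzero, so $m = 4$ works.

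It remains to produce an admissible prime $p \le 2 \log N$. Setting $V_i := \{p \text{ prime} : p^2 \mid M_i\}$ for $i = 1, 2$, the divisibility $\prod_{p \in V_i} p^2 \mid M_i \le N$ gives $\sum_{p \in V_i} \log p \le \tfrac{1}{2} \log N$, hence
$$
\sum_{p \text{ inadmissible}} \log p \;\le\; \sum_{p \in V_1} \log p + \sum_{p \in V_2} \log p \;\le\; \log N.
$$
By Chebyshev's estimate $\theta(Y) := \sum_{p \le Y} \log p > Y/2$ (valid once $Y$ exceeds a small absolute threshold), we obtain $\theta(2 \log N) > \log N$ for all sufficiently large $N$, forcing some prime $p \le 2\log N$ to be admissible. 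The main obstacle to this scheme is to settle the finitely many small values of $N$ just above $3$ for which the Chebyshev comparison is too weak; since for each such $N$ only finitely many pairs of eigenforms of conductor dividing $N$ exist, these cases reduce to a direct verification.
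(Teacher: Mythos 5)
Your plan---locate a small prime $p$ and a power $m\in\{1,2,4\}$ with $a_f(p^m)a_g(p^m)\neq 0$---is exactly the paper's, and your case analysis is a correct Hecke-recurrence version of the trigonometric computation in \propref{th3}. The gap is at the start. You assert that a normalized Hecke eigenform $f\in S_{k_1}(N_1)$ ``is actually a newform of some level $M_1\mid N_1$,'' and then reason about $a_f(p^j)$ at primes $p$ that may divide $N_1$, splitting on $p\nmid M_1$ versus $p\parallel M_1$. But a Hecke eigenform in $S_k(N)$ need not be a newform of any level: it may be an oldform eigenvector such as $f_0(z)-\beta f_0(pz)$ with $p\mid N$, $p\nmid M$ and $\beta$ a root of $x^2-a_{f_0}(p)x+p^{k-1}$, whose $p$-power coefficients are geometric ($\mu^j$ for the complementary root $\mu$) and satisfy neither the newform $T_p$-recurrence you invoke for $p\nmid M_1$ nor the relation $a(p^j)=a(p)^j$ you invoke for $p\parallel M_1$. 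Since the theorem's hypotheses permit such forms, your argument does not cover all cases. The clean repair---and what the paper does---is to abandon admissibility and simply require $p\nmid N_1N_2$, i.e.\ $p\nmid N$; at such primes Deligne's bound and the standard $T_p$-recurrence hold unconditionally, and your case analysis goes through verbatim. Nothing is lost in the counting step, since producing a prime $p\le 2\log N$ with $p\nmid N$ (using $\prod_{p\mid N}p\le N$) needs exactly the Chebyshev inequality $\theta(2\log N)>\log N$ that you already use. You are right to flag the small-$N$ issue: the Rosser--Schoenfeld estimate used here gives $\theta(x)>x/2$ only for $x\ge 5$, so one must check $3<N<e^{5/2}$ separately.
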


Note that $a_f(1)a_g(1) =1 \ne 0$ but we are trying
to find the first natural number $n >1$ with $(n,N)=1$
for which $a_f(n)a_g(n) \ne 0$
which we call the first non-trivial simultaneous non-vanishing.
Though first simultaneous sign change (see \cite{LLW}, also \thmref{th1}
above) implies first non-trivial simultaneous non-vanishing but the bound
proved in \thmref{th_5} is much stronger
for first non-trivial simultaneous non-vanishing.

The paper is organized as follows. 
In the next section, we introduce notations and 
briefly recall some preliminaries. In sections 3 to 7, we provide
proofs of theorems mentioned in the introduction. 
Finally, in the last section, using $\B$-free numbers, we
deduce certain results about
simultaneous non-vanishing of coefficients of symmetric 
power $L$-functions of non-CM forms in short intervals.

\section{Notation and Preliminaries}

Throughout the paper, $p$ denotes a prime number
and $\mathcal{P}$ denotes the set of all primes. We say 
that a subset $S\subset \mathcal{P}$ has natural density $d(S)$ 
if the limit 
$$
\lim_{x\to \infty}
\frac{\#\{p\in\mathcal{P}: p\le x \text{ and } p\in S\}}
{\#\{p\in\mathcal{P} : p\le x\}}
$$
exists and equal to $d(S)$. For any non-negative real number 
$x$, we denote the greatest integer $n \le x$ by $[x]$. Let $A$ 
be a subset of the set of natural numbers. Then we say the 
density of the set $A$ is $d(A)$ if the limit
$$
\lim_{x\to \infty}\frac{\#\{n\le x : n\in A\}}{\#\{n\le x\}}
$$
exists and equal to the real number $d(A)$. For any $n,m\in\N$, 
we shall denote the greatest common divisor of $n$ and $m$ by 
$(n,m)$.
 
For a normalized Hecke eigenform 
$f \in S_k^{new}(N)$ 
with Fourier expansion 
$$
f(z)=\sum_{n\ge 1}a_f(n)q^n, 
$$
we write 
$$
\lambda_f(n):=\frac{a_f(n)}{n^{(k-1)/2}}.
$$ 
From the theory of Hecke operators, we know
\begin{equation}\label{eq2}
\lambda_f(1)=1
\phantom{m}\text{ and }\phantom{m}
\lambda_f(m)\lambda_f(n)
=\sum_{d|(m,n),(d,N)=1}\lambda_f\left(\frac{mn}{d^2}\right).
\end{equation}
Also by a celebrated work of Deligne, we have 
\begin{equation}\label{eq3}
|\lambda_f(n)|\le d(n) \text{ for all }(n,N)=1,
\end{equation}
where $d(n)$ denotes the number of positive divisors of $n$.

The following result of Kowalski-Robert-Wu
\cite[Lemma 2.3]{KRW} (see also 
Murty-Murty \cite[Lemma 2.5]{MM} )
plays an important role in the proof of \thmref{th_4}. 

\begin{lem}\label{lem6}
Let 
$$
f(z):=\sum_{n=1}^\infty a_f(n)q^n \in S_k^{new}(N)
$$
be a normalized non-CM Hecke eigenform.
For $\nu \ge 1$, let 
$$
P_{f,\nu}:=\{p\in\mathcal{P} ~|~ p\nmid N 
\text{ and } \lambda_f(p^\nu)= 0\}.
$$
Then for any $\nu\ge 1$, we have 
$$
\#(P_{f,\nu}\cap [1,x])\ll_{f,\delta} \frac{x}{(\log x)^{1+\delta}}
$$
for any $x\ge 2$ and $0<\delta<1/2$. Here the implied constant 
depends on $f$ and $\delta$. Let 
$$
P_f:=\cup_{\nu\in\N}P_{f,\nu}.
$$
Then for any $x\ge 2$ and $0<\delta<1/2$, 
we have 
$$
\#(P_f\cap[1,x])
\ll_{f,\delta} 
\frac{x}{(\log x)^{1+\delta}},
$$
where the implied constant 
depends only on $f$ and $\delta$.
\end{lem}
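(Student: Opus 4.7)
The approach is to exploit the automorphy and analytic properties of symmetric power $L$-functions attached to the non-CM form $f$, together with a Selberg--Delange type mean value theorem. By Deligne's bound, for $p\nmid N$ we may write $\lambda_f(p)=2\cos\theta_p$ with $\theta_p\in[0,\pi]$; the Hecke recursion then gives
$$
\lambda_f(p^\nu)=\frac{\sin((\nu+1)\theta_p)}{\sin\theta_p}=\lambda_{\mathrm{Sym}^\nu f}(p).
$$
Hence $p\in P_{f,\nu}$ if and only if $\theta_p$ belongs to the finite set $\{j\pi/(\nu+1)\,:\,1\le j\le \nu\}$, and consequently $p\in P_f$ forces $\theta_p\in\pi\Q$.

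Next I would invoke cuspidality of the symmetric powers. Since $f$ has no complex multiplication, work of Gelbart--Jacquet, Kim--Shahidi, Kim, Shahidi (and its extensions to higher $\nu$) realizes $\mathrm{Sym}^\nu f$ as a cuspidal automorphic representation on $\mathrm{GL}_{\nu+1}$. Consequently $L(s,\mathrm{Sym}^\nu f)$ is entire and non-vanishing on the line $\mathrm{Re}(s)=1$, while $L(s,\mathrm{Sym}^\nu f\times\overline{\mathrm{Sym}^\nu f})$ has a simple pole at $s=1$, and all of these $L$-functions admit a standard zero-free region of the form $\mathrm{Re}(s)>1-c/\log(|t|+2)$.

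The quantitative counting step is to detect the vanishing condition $\lambda_f(p^\nu)=0$ through a polynomial combination of the eigenvalues $\lambda_{\mathrm{Sym}^m f}(p)$ for $0\le m\le 2\nu$, and then to apply the Selberg--Delange method to the Dirichlet series associated to the indicator of $P_{f,\nu}$. The holomorphy, non-vanishing, and zero-free region information for the symmetric power $L$-functions feeds into Selberg--Delange to yield $\#(P_{f,\nu}\cap[1,x])\ll_{f,\delta}x/(\log x)^{1+\delta}$ for any $0<\delta<1/2$. For the union $P_f=\cup_{\nu\ge 1}P_{f,\nu}$, I would exploit that the bound is uniform in $\nu$ once $\nu$ is allowed to grow slowly with $x$, and that $p\in P_f$ in any case pins down $\theta_p\in\pi\Q$, so a diagonal summation over $\nu$ preserves the estimate.

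The main obstacle is obtaining the full saving of $(\log x)^{1+\delta}$ as opposed to merely showing density zero. A direct application of the Sato--Tate theorem (via Barnet--Lamb--Geraghty--Harris--Taylor) would immediately give density zero since the target angle set is finite, but without an effective rate; the improvement to a logarithmic saving requires the precise analytic inputs above, and in particular the non-vanishing of $L(s,\mathrm{Sym}^m f)$ on $\mathrm{Re}(s)=1$ together with the standard zero-free region. A secondary technical point is ensuring that the implied constants in the Selberg--Delange step depend only on $f$ and $\delta$ and not on $\nu$, which is needed for the union bound.
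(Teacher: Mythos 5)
The paper does not prove this lemma; it cites it to Kowalski--Robert--Wu \cite[Lemma~2.3]{KRW} and Murty--Murty \cite[Lemma~2.5]{MM}. Those proofs are Galois-theoretic rather than automorphic: they use Serre's large-image results for the mod-$\ell$ Galois representations attached to a non-CM newform together with an effective Chebotarev density theorem to bound $\#\{p\le x:\lambda_f(p)=c\}$ for a fixed algebraic number $c$, choosing the modulus $\ell$ to grow slowly with $x$. Your route via automorphy of $\mathrm{Sym}^\nu f$ and Selberg--Delange is a genuinely different strategy, but as written it has two real gaps.

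First, the passage from a bound for each fixed $P_{f,\nu}$ to a bound for $P_f=\bigcup_{\nu}P_{f,\nu}$ is not justified. You cannot simply ``diagonally sum'' a bound of size $x/(\log x)^{1+\delta}$ over infinitely many $\nu$, and allowing $\nu$ to grow with $x$ does not resolve it: a single prime $p$ with $\lambda_f(p)=0$ lies in $P_{f,\nu}$ for every odd $\nu$, so the sets $P_{f,\nu}$ do not thin out as $\nu\to\infty$. The observation that makes the union tractable --- and which is the arithmetic heart of the cited proofs --- is that $\lambda_f(p^\nu)=0$ forces $\alpha_p/\beta_p$ to be a root of unity, hence $\lambda_f(p)=\zeta+\zeta^{-1}$ for a root of unity $\zeta$; since $\lambda_f(p)$ lies in the fixed Hecke field $K_f$, only finitely many such values $\zeta+\zeta^{-1}$ can occur. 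Thus $P_f$ is a \emph{finite} union of level sets $\{p:\lambda_f(p)=c\}$ for $c$ in a fixed finite set depending only on $f$, and the union bound is immediate. Your write-up notes only that $\theta_p\in\pi\mathbb{Q}$, which is much weaker and does not by itself cap the number of admissible angles.

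Second, the detection step is under-specified. The set $\{p:\lambda_f(p^\nu)=0\}$ is a level set of a continuous statistic, hence has Sato--Tate measure zero, and Selberg--Delange or mean-value theorems applied to a fixed polynomial in $\lambda_{\mathrm{Sym}^m f}(p)$ cannot isolate a point; at best they estimate $\#\{p\le x:\theta_p\in I\}$ for an interval $I$. To get the $x/(\log x)^{1+\delta}$ saving you must shrink $I$ with $x$ and control the error uniformly, i.e.\ you need a genuinely effective Sato--Tate theorem. That is available only with the full strength of Newton--Thorne automorphy of all symmetric powers plus quantitative zero-free regions, inputs far deeper than what the cited lemma uses (and more recent than the paper itself). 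You should either spell out the effective equidistribution input and the balancing of $|I|$ against the error term, or, more simply, invoke the algebraicity observation above to reduce to finitely many level sets and then apply the Serre--Chebotarev argument for each.
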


We now recall some well known properties of Rankin-Selberg 
$L$-function associated with $f\in S_{k_1}^{new}(N_1)$ 
and $g\in S_{k_2}^{new}(N_2)$ which are
normalized Hecke eigenforms. Suppose that $k_1~\le~k_2$.
One can now define the 
Rankin-Selberg $L$-function as follows
$$
R(f,g; s):=\sum_{n\ge 1}\lambda_f(n)\lambda_g(n)n^{-s},
$$
which is absolutely convergent for $\Re(s)>1$ and hence 
it defines a holomorphic function there. Let 
$M:= \text{gcd}(N_1,N_2)$ and $N:= \text{lcm}[N_1,N_2]$ be 
square-free. 
By the work of Rankin \cite{RR} (see also \cite{AO}, page 304), 
one knows that the function 
$\zeta_N(2s)R(f,g;s)$ is entire if $f\neq g$, where $\zeta_N(s)$ 
is defined by 
\begin{equation}\label{zeta}
\zeta_N(s):=\prod_{p \nmid N}\left(1-p^{-s}\right)^{-1}
\phantom{m}\text{ for  }~ \Re(s) > 1.
\end{equation}
We also have 
the completed Rankin-Selberg $L$-function 
\begin{equation}
R^*(f,g;s):=(2\pi)^{-2s}\Gamma(s+\frac{k_2-k_1}{2})
\Gamma(s+\frac{k_1+k_2}{2}-1)
\prod_{p|M}(1-c_pp^{-s})^{-1}\zeta_N(2s) R(f,g; s)
\end{equation} 
with $c_p=\pm 1$ depending on the forms $f$ and $g$.
It is well known by the works of Ogg (see \cite[Theorem 6]{AO}) and 
Winnie Li (see \cite[Theorem 2.2]{WL}) that the  
completed Rankin-Selberg $L$-function  
satisfies the functional equation 
\begin{equation}\label{eq_5}
R^*(f,g; s)=N^{1-2s}R^*(f,g; 1-s).
\end{equation}

\section{Proof of \thmref{th1}}

Throughout this section, we assume that $N_1$ and $N_2$ 
are square-free and $f \in S_{k_1}^{new}(N_1)$, 
$g \in~S_{k_2}^{new}(N_2)$  
are two distinct normalized Hecke eigenforms with
$1 < k_1 \le k_2$.  In order to prove \thmref{th1},
we need to prove the following 
Propositions. 

\begin{prop}
For square-free integers $N_1,N_2$, let $f\in S_{k_1}^{new}(N_1),
g\in S_{k_2}^{new}(N_2)$ be normalized Hecke eigenforms with 
$f\neq g$ and let $N:= \text{lcm}[N_1,N_2]$ and $M:= (N_1, N_2)$. 
Then for any $t\in \R$ and 
$\epsilon>0$, one has 
\begin{eqnarray*}
&& \zeta_N(2+2\epsilon+ 2it)R(f,g; 1+\epsilon+it)\ll_{\epsilon} 1\\
&\text{ and }& 
\zeta_N(-2\epsilon + 2it)R(f,g; -\epsilon + it)
\ll_{\epsilon} 
N^{2+4\epsilon}
\left(1+\frac{k_2-k_1}{2}\right)^{1+2\epsilon}
\left(\frac{k_1+k_2}{2}\right)^{1+2\epsilon}
|1+ it |^{2(1+2\epsilon)},
\end{eqnarray*}
where $\zeta_N(s)$ is defined in \eqref{zeta}.
\end{prop}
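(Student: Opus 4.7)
The first inequality is a direct consequence of absolute convergence. The plan is to apply Deligne's bound \eqref{eq3}, which gives $|\lambda_f(n)\lambda_g(n)|\le d(n)^2$, so that
$|R(f,g;1+\epsilon+it)|\le \sum_{n\ge 1} d(n)^2 n^{-1-\epsilon} \ll_\epsilon 1$, while trivially $|\zeta_N(2+2\epsilon+2it)|\le \zeta(2+2\epsilon) \ll_\epsilon 1$. This takes essentially no work.

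For the second inequality I plan to use the functional equation \eqref{eq_5} to transplant this estimate from $\Re(s)=1+\epsilon$ to $\Re(s)=-\epsilon$. Writing $R^*(f,g;s)=G(s)\zeta_N(2s)R(f,g;s)$ with
\[
G(s) := (2\pi)^{-2s}\Gamma\bigl(s+\tfrac{k_2-k_1}{2}\bigr)\Gamma\bigl(s+\tfrac{k_1+k_2}{2}-1\bigr)\prod_{p\mid M}(1-c_p p^{-s})^{-1},
\]
and setting $s_0 := -\epsilon+it$, the functional equation rearranges to
\[
\zeta_N(2s_0)R(f,g;s_0) \;=\; N^{1-2s_0}\,\frac{G(1-s_0)}{G(s_0)}\,\zeta_N\bigl(2(1-s_0)\bigr)\,R(f,g;1-s_0).
\]
Because $1-s_0=1+\epsilon-it$, the last two factors are $O_\epsilon(1)$ by the first inequality already proved. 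The remaining task is thus to bound the prefactor $N^{1-2s_0}G(1-s_0)/G(s_0)$ piece by piece.

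The contribution from the powers of $N$ and $2\pi$ is immediate: $|N^{1-2s_0}|=N^{1+2\epsilon}$ and $|(2\pi)^{4s_0-2}|\ll 1$. The local Euler factors at primes $p\mid M$ are bounded via $\prod_{p\mid M}|1-c_pp^{-s_0}|\le \prod_{p\mid M}(1+p^{\epsilon})\ll 2^{\omega(M)}M^{\epsilon}\ll_\epsilon N^{O(\epsilon)}$, which is comfortably absorbed into the slack $N^{4\epsilon}$ in the stated bound. The Gamma ratio
\[
\frac{\Gamma\bigl(1-s_0+\tfrac{k_2-k_1}{2}\bigr)\Gamma\bigl(-s_0+\tfrac{k_1+k_2}{2}\bigr)}{\Gamma\bigl(s_0+\tfrac{k_2-k_1}{2}\bigr)\Gamma\bigl(s_0+\tfrac{k_1+k_2}{2}-1\bigr)}
\]
I plan to estimate via Stirling's asymptotic $\Gamma(z+a)/\Gamma(z+b)\sim z^{a-b}$, applied with $z=\tfrac{k_2-k_1}{2}+it$ (resp.\ $\tfrac{k_1+k_2}{2}+it$) and $a-b=1+2\epsilon$; Schwarz reflection $|\Gamma(\sigma+i\tau)|=|\Gamma(\sigma-i\tau)|$ harmonizes signs. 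Together with the elementary inequality $|\tfrac{k_2-k_1}{2}+it|\le (1+\tfrac{k_2-k_1}{2})|1+it|$ (and analogously for $\tfrac{k_1+k_2}{2}$), this produces precisely the factors $(1+\tfrac{k_2-k_1}{2})^{1+2\epsilon}(\tfrac{k_1+k_2}{2})^{1+2\epsilon}|1+it|^{2(1+2\epsilon)}$ in the stated bound.

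The principal technical point is to make the Stirling estimate for the Gamma ratio effective \emph{uniformly} in the three parameters $|t|$, $k_2-k_1$, and $k_1+k_2$, which are allowed to vary independently in widely different regimes (e.g.\ small $|t|$ with large weights, or vice versa). However, the asymptotic $\Gamma(z+a)/\Gamma(z+b)\sim z^{a-b}$ holds with implied constants depending only on $a,b,\epsilon$ throughout any fixed right half-plane bounded away from the non-positive integers, which is exactly our situation since $k_1,k_2\ge 2$ keeps every shifted argument in such a region. Beyond this uniformity check, the remainder is routine bookkeeping.
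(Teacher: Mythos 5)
Your proposal is correct and follows essentially the same route as the paper: both bound the first line by absolute convergence via Deligne, and both obtain the second line by substituting into the functional equation \eqref{eq_5} and estimating the archimedean ratio via Stirling, the $\prod_{p\mid M}$ local factors trivially, and the remaining $\zeta_N\cdot R$ factor on $\Re(s)=1+\epsilon$ by the first inequality. The only cosmetic difference is that you bundle everything into a single ratio $G(1-s_0)/G(s_0)$, whereas the paper writes out the individual Gamma quotients and Euler products in \eqref{eq6} before estimating each; substantively these are identical.

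One small imprecision worth fixing in the write-up: the asymptotic $\Gamma(z+a)/\Gamma(z+b)\sim z^{a-b}$ does not hold with a uniform implied constant near $z=0$ (the right-hand side degenerates while the left is generically a nonzero constant), and when $k_1=k_2$ and $t\to 0$ you are exactly in that regime with $z=\tfrac{k_2-k_1}{2}+it\to 0$. The conclusion is nevertheless correct because (i) for $|z|$ bounded the ratio is $O_\epsilon(1)$, since the denominator arguments stay away from the poles of $\Gamma$ (here is where fixed $\epsilon>0$, not the hypothesis $k_1,k_2\ge 2$, is doing the work for the $\tfrac{k_2-k_1}{2}$ factor), and (ii) the target bound $(1+\tfrac{k_2-k_1}{2})^{1+2\epsilon}|1+it|^{1+2\epsilon}$ is always $\ge 1$, so the bounded range is absorbed. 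Rephrasing the Stirling step as ``$\Gamma(z+a)/\Gamma(z+b)\ll_{a,b}(1+|z|)^{a-b}$ for $\Re(z+b)$ bounded below away from nonpositive integers'' would remove the ambiguity. Also, you only estimate the Euler factor $\prod_{p\mid M}|1-c_pp^{-s_0}|$; the companion factor $\prod_{p\mid M}|1-c_pp^{-(1-s_0)}|^{-1}\ll_\epsilon 1$ (which the paper records separately) should be noted as well, though it is trivial.
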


\begin{proof}
Since $\zeta_N(2+2\epsilon+ 2it)$ and $R(f,g; 1+\epsilon+it)$ 
are absolutely convergent for $\Re(s)>1$, we have the first inequality.
To derive the second inequality, we use functional equation.
From the functional equation \eqref{eq_5}, we have 
\begin{eqnarray}\label{eq6}
\zeta_N(2-2s) \cdot R(f,g; 1-s)
&=& (2\pi)^{2-4s}\cdot N^{2s-1}
\cdot \frac{\Gamma(s+\frac{k_2-k_1}{2})}{\Gamma(1-s+\frac{k_2-k_1}{2})}
\cdot \frac{\Gamma(s+\frac{k_1+k_2}{2}-1)}{\Gamma(-s+\frac{k_1+k_2}{2})}\\
&&
\cdot \prod_{p|M}\left(\frac{1-c_pp^{s-1}}{1-c_pp^{-s}}\right)
\cdot \zeta_N(2s) \cdot R(f,g; s).\nonumber
\end{eqnarray}
Using Stirling's formula (see page 57 of \cite{AI}), we have 
\begin{eqnarray*}
&&\left|\frac{\Gamma(1+\frac{k_2-k_1}{2}+\epsilon+it)}
{\Gamma(\frac{k_2-k_1}{2}-\epsilon+it)}\right|
\ll_\epsilon \left(1+\frac{k_2-k_1}{2}\right)^{1+ 2\epsilon}|1+it|^{1+2\epsilon}\\
&\text{ and }& 
\left|\frac{\Gamma(\frac{k_1+k_2}{2}+\epsilon+it)}
{\Gamma(\frac{k_1+k_2}{2}-1-\epsilon+it)}\right| 
\ll_\epsilon \left(\frac{k_1+k_2}{2}\right)^{1+2\epsilon}|1+it|^{1+2\epsilon}.
\end{eqnarray*}
For all $t\in \R$, we also have 
\begin{eqnarray*}
&&\left|\prod_{p|M}(1-c_pp^{-1-\epsilon-it})^{-1}\right|
=\left|\prod_{p|M}\sum_{m\ge 0}(c_pp^{-1-\epsilon-it})^{m}\right|
\le \prod_{p|M}\sum_{m\ge 0}(p^{-1-\epsilon})^{m}
\ll_\epsilon 1\\
&\text{ and }&
\left|\prod_{p|M}(1-c_pp^{\epsilon+it})\right|
=
\prod_{p|M}\left|1-c_pp^{\epsilon+it}\right|
\le\prod_{p|M}\left(1+p^\epsilon\right) 
\le \prod_{p|M} p^{1+\epsilon}
\ll_\epsilon M^{1+ 2\epsilon}.
\end{eqnarray*}
Putting $s=1+\epsilon+it$ in \eqref{eq6} and using the above 
estimates along with the first inequality, we get the second 
inequality.
\end{proof}

The next proposition provides convexity bound for 
Rankin-Selberg $L$-function $R(f,g ; s)$. 

\begin{prop}\label{lem20}
For square-free integers $N_1,N_2$, let $f\in S_{k_1}^{new}(N_1),
~g\in S_{k_2}^{new}(N_2)$ be normalized Hecke eigenforms with 
$f\neq g$ and $N:= \text{lcm}[N_1,N_2]$. Then for any 
$t\in \R,~\epsilon>0$ and $1/2< \sigma <1$, one has 
\begin{equation}
R(f,g; \sigma+it)\ll_{\epsilon} N^{2(1-\sigma+\epsilon)}
\left(1+\frac{k_2-k_1}{2}\right)^{1-\sigma+\epsilon}
\left(\frac{k_1+k_2}{2}\right)^{1-\sigma+\epsilon}
(3+|t|)^{2(1-\sigma)+\epsilon}.
\end{equation}
\end{prop}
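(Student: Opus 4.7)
The plan is to apply the Phragm\'en--Lindel\"of convexity principle to the entire function $F(s) := \zeta_N(2s)R(f,g;s)$ in the vertical strip $-\epsilon \le \Re(s) \le 1+\epsilon$, taking the two boundary estimates just established in the preceding proposition as input, and then to strip off the $\zeta_N(2s)$ factor in the region $\Re(s) > 1/2$, where it is harmless. Note that $F$ is entire since $f \neq g$ (as recalled in the discussion surrounding \eqref{zeta}), and is of finite order in the closed strip by a standard Stirling estimate on the $\Gamma$-factors in $R^*(f,g;s)$ combined with the functional equation \eqref{eq_5}, so the hypotheses of Phragm\'en--Lindel\"of are met.

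Writing $Q := N\bigl(1+\frac{k_2-k_1}{2}\bigr)\bigl(\frac{k_1+k_2}{2}\bigr)$ for brevity, the preceding proposition supplies $|F(1+\epsilon+it)| \ll_\epsilon 1$ on the right boundary and $|F(-\epsilon+it)| \ll_\epsilon Q^{1+2\epsilon}\, N^{2\epsilon}\,(3+|t|)^{2(1+2\epsilon)}$ on the left. The convexity principle interpolates these affinely across the strip, and evaluating the interpolated bound at an arbitrary $\sigma \in (1/2,1)$, together with a harmless relabeling of $\epsilon$ by a fresh constant multiple of itself (to absorb the denominator $1/(1+2\epsilon)$ appearing in the interpolated exponents), yields
$$
|F(\sigma+it)| \ll_\epsilon Q^{1-\sigma+\epsilon}(3+|t|)^{2(1-\sigma)+\epsilon}.
$$

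To pass from $F$ to $R(f,g;s)$ one then divides by $\zeta_N(2s)$. For $\Re(s) = \sigma > 1/2$ the Euler product $\zeta_N(2s)^{-1} = \prod_{p \nmid N}(1 - p^{-2s})$ converges absolutely and satisfies $|\zeta_N(2s)|^{-1} \le \zeta(2\sigma)/\zeta(4\sigma)$, a constant depending only on $\sigma$; absorbing this constant into $\ll_\epsilon$ (legitimate since the statement fixes $\sigma$) produces the claimed bound on $R(f,g;\sigma+it)$. The whole proof is a textbook convexity bound, so there is no substantive obstacle; the only point that deserves a little care is the verification of the finite-order hypothesis for Phragm\'en--Lindel\"of uniformly in the closed strip, but this is routine from Stirling and \eqref{eq_5}.
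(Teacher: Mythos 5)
Your proof is correct and follows essentially the same route as the paper: both proceed by applying a Phragm\'en--Lindel\"of / Rademacher convexity interpolation to $\zeta_N(2s)R(f,g;s)$ between $\Re(s)=-\epsilon$ and $\Re(s)=1+\epsilon$ using the boundary bounds of the preceding proposition, and then dividing out $\zeta_N(2s)$ for $\sigma>1/2$. The only cosmetic difference is in the treatment of $|\zeta_N(2s)|^{-1}$: the paper quotes the cruder bound $\ll_\epsilon \log\log(N+2)\cdot|1+it|^\epsilon$ and absorbs it into the $\epsilon$-powers, while your uniform bound $\le \zeta(2\sigma)/\zeta(4\sigma)$ is cleaner, though both suffice.
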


To prove this proposition, we shall use the following strong 
convexity principle due to Rademacher. 

\begin{prop}[Rademacher \cite{R}]\label{prop21}
Let $g(s)$ be continuous on the closed strip $a\le \sigma \le b$, 
holomorphic and of finite order on $a <  \sigma < b$. Further 
suppose that 
$$
|g(a+it)|\le E|P+a+it|^\alpha,
\tab 
|g(b+it)|\le F|P+b+it|^\beta
$$
where $E,F$ are positive constants and $P,\alpha,\beta$ are 
real constants that satisfy 
$$
P+a>0, \tab \alpha\ge \beta.
$$
Then for all $a<\sigma<b$ and for all $t\in \R$, we have 
$$
|g(\sigma+it)|\le (E|P+\sigma+it|^\alpha)^{\frac{b-\sigma}{b-a}}
(F|P+\sigma+it|^\beta)^{\frac{\sigma-a}{b-a}}.
$$
\end{prop}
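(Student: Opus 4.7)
The statement is Rademacher's refinement of the Hadamard three-lines / Phragm\'en--Lindel\"of principle, allowing the boundary majorants to grow polynomially in $|t|$ with possibly different exponents on the two edges. The plan is to prove it by subharmonic comparison: construct a superharmonic majorant of $\log|g|$ with the correct boundary values and apply the Phragm\'en--Lindel\"of principle in its subharmonic form.

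Set $u(s):=\log|g(s)|$, which is subharmonic on the open strip (with the convention $u=-\infty$ at zeros of $g$). Take as candidate majorant
$$
M(\sigma+it):=\frac{(b-\sigma)\log E+(\sigma-a)\log F}{b-a}+\gamma(\sigma)\log|P+\sigma+it|,\tab \gamma(\sigma):=\frac{\alpha(b-\sigma)+\beta(\sigma-a)}{b-a}.
$$
Exponentiating, $e^{M(s)}$ is precisely the right-hand side of the claimed inequality, so the claim is equivalent to $u\le M$ in the interior of the strip. The first (linear in $\sigma$) term of $M$ is harmonic in $s$. For the second term, since $P+s$ does not vanish in the strip (because $P+a>0$), $\log|P+s|$ is harmonic, and a direct computation using $\partial_\sigma\log|P+s|=(P+\sigma)/|P+s|^2$ yields
$$
\Delta\bigl[\gamma(\sigma)\log|P+s|\bigr]=\frac{2\gamma'(\sigma)(P+\sigma)}{|P+s|^2}=\frac{2(\beta-\alpha)(P+\sigma)}{(b-a)|P+s|^2}\le 0
$$
under the hypothesis $\alpha\ge\beta$. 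Hence $M$ is superharmonic on the strip.

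Next, the boundary values of $M$ match the given majorants exactly: $M(a+it)=\log E+\alpha\log|P+a+it|\ge u(a+it)$, and similarly $M(b+it)\ge u(b+it)$. Consequently $u-M$ is subharmonic (subharmonic plus minus-a-superharmonic), non-positive on both vertical boundaries, and of at most polynomial growth in $|t|$: the finite-order hypothesis on $g$ gives polynomial control on $u$, while $M$ itself grows only like $\log|t|$. The subharmonic Phragm\'en--Lindel\"of principle on strips then forces $u-M\le 0$ throughout the interior, which is the claimed inequality.

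The argument is essentially a standard superharmonic-majorant comparison combined with Phragm\'en--Lindel\"of, so there is no conceptual difficulty. The hypothesis $\alpha\ge\beta$ plays its decisive role precisely through the sign of $\gamma'(\sigma)=(\beta-\alpha)/(b-a)$, which determines the sign of $\Delta M$ and hence whether $M$ is superharmonic; without it the comparison fails. The only technical point worth care is verifying the growth hypothesis of the subharmonic Phragm\'en--Lindel\"of principle on the strip of width $b-a$ for $u-M$, but this is immediate from the finite-order hypothesis on $g$ together with the merely logarithmic growth of $M$ in $|t|$. A tempting holomorphic alternative---dividing $g$ by $(P+s)^{\gamma(s)}$ and invoking three-lines directly---fails because the complex exponent introduces a parasitic factor $\exp(-\Im\gamma(s)\arg(P+s))$ that cannot be eliminated; the subharmonic route bypasses this obstruction entirely.
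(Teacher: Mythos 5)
The paper cites this proposition to Rademacher's 1959 paper and does not reproduce a proof, so there is no in-paper argument to compare against; you are supplying one. Your subharmonic-majorant proof is correct. The key computation
$$
\Delta\bigl[\gamma(\sigma)\log|P+s|\bigr]=2\gamma'(\sigma)\,\frac{P+\sigma}{|P+s|^{2}}=\frac{2(\beta-\alpha)(P+\sigma)}{(b-a)\,|P+s|^{2}}
$$
is right (the cross term in $\Delta(uv)=v\Delta u+2\nabla u\cdot\nabla v+u\Delta v$ is the only survivor), the hypotheses $\alpha\ge\beta$ and $P+a>0$ (hence $P+\sigma>0$ on the whole strip) give the nonpositive sign, $e^{M}$ does exponentiate to the stated right-hand side, and $\gamma(a)=\alpha$, $\gamma(b)=\beta$ make the boundary majorization exact. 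Since $\log|g|$ has polynomial growth by the finite-order hypothesis and $M$ only logarithmic growth, $u-M$ sits far inside the admissible growth range $o\bigl(\exp(\pi|t|/(b-a))\bigr)$ for Phragm\'en--Lindel\"of on a strip of width $b-a$, which closes the argument.

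Your closing remark about the na\"ive holomorphic route is also accurate and is the genuinely nontrivial point here: dividing by $(P+s)^{L(s)}$ with $L$ the complex-linear interpolant $\gamma(\sigma)+i\gamma'(\sigma)t$ produces the modulus factor $\exp\bigl(-\gamma'(\sigma)\,t\,\arg(P+s)\bigr)$, which is $\ge 1$ (wrong side of the inequality) and grows like $\exp\bigl(|\gamma'(\sigma)|\,|t|\,\pi/2\bigr)$ as $|t|\to\infty$; it is not cancellable by any harmonic correction because $t\,\arg(P+s)$ is not harmonic. Passing to $\log|g|$ and using a superharmonic (rather than log-of-holomorphic) majorant is precisely what dissolves this obstruction, and it is the natural mechanism underlying Rademacher's refinement of the three-lines theorem.
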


\smallskip
We are now ready to prove Proposition \ref{lem20}.
\begin{proof}
We apply Proposition \ref{prop21} with 
\begin{eqnarray*}
&a&
=-\epsilon, \tab b=P=1+\epsilon, \tab F=C_2,\\
&E&
=C_1N^{2+4\epsilon}
\left(1+\frac{k_2-k_1}{2}\right)^{1+2\epsilon}
\left(\frac{k_1+k_2}{2}\right)^{1+2\epsilon}, 
~\alpha=2+ 4\epsilon, ~\beta=0,
\end{eqnarray*}
where $C_1,C_2$ are absolute constants 
depending only on $\epsilon$. Thus for any 
$-\epsilon < \sigma < 1+\epsilon$, we have 
$$
\zeta_N(2\sigma+ 2it)R(f,g; \sigma+it)\ll_{\epsilon} 
\left[N^{\frac{2+ 4\epsilon}{1+2\epsilon}} \left(1+\frac{k_2-k_1}{2}\right)
\left(\frac{k_1+k_2}{2}\right)\right]^{1-\sigma+\epsilon}
(1+\sigma+\epsilon+|t|)^{2(1-\sigma+\epsilon)}.
$$
Note that for $1/2<\sigma<1+\epsilon$, one knows
$$
|\zeta_N(2\sigma+2it)|^{-1}
\ll_\epsilon 
\log\log (N+2)\cdot |1+it|^\epsilon.
$$
Combining all together, we get \propref{lem20}.
\end{proof}

As an immediate corollary, we have 

\begin{cor}\label{cor11}
For square-free integers $N_1,N_2$, let $f\in S_{k_1}^{new}(N_1),
~g\in S_{k_2}^{new}(N_2)$ be normalized Hecke eigenforms with 
$f\neq g$ and $N:= \text{lcm}[N_1, N_2]$. Then for any $t\in \R$ and any 
$\epsilon>0$, one has 
$$
R(f,g; 3/4+it)
\ll_{\epsilon} 
\left[N^2 \left(1 + \frac{k_2-k_1}{2}\right)
\left(\frac{k_1+k_2}{2}\right) \right]^{1/4+\epsilon}
(3+|t|)^{1/2+\epsilon}.
$$
\end{cor}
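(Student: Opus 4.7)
The plan is essentially a one-line argument: the corollary is the specialization $\sigma = 3/4$ of the convexity bound in \propref{lem20}. Indeed, plugging $\sigma = 3/4$ into the estimate
$$
R(f,g; \sigma+it) \ll_{\epsilon} N^{1-\sigma+\epsilon}
\left(1+\frac{k_2-k_1}{2}\right)^{1-\sigma+\epsilon}
\left(\frac{k_1+k_2}{2}\right)^{1-\sigma+\epsilon}
(3+|t|)^{2(1-\sigma)+\epsilon}
$$
immediately yields exponents $1/4+\epsilon$ on each of $N$, $1+(k_2-k_1)/2$, $(k_1+k_2)/2$, and $1/2+\epsilon$ on $3+|t|$, which is exactly the stated bound. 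The only mild point to check is that $3/4$ lies in the admissible range $1/2 < \sigma < 1$ of \propref{lem20}, which is clear.

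Since nothing further is needed, there is no real obstacle; the work was already carried out in proving the convexity bound via Rademacher's three-lines principle. The corollary simply records the balanced midpoint $\sigma = 3/4$ because this is the value that will be most useful in the subsequent application to bounding partial sums of $\lambda_f(n)\lambda_g(n)$ (via a Perron-type contour shift to the line $\Re(s) = 3/4$ in the proof of \thmref{th1}). Thus the proof reduces to the single sentence: apply \propref{lem20} with $\sigma = 3/4$.
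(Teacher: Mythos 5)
Your proof is correct and is exactly the paper's argument: the corollary is stated immediately after Proposition~\ref{lem20} as an ``immediate corollary,'' and indeed it is nothing more than the substitution $\sigma = 3/4$, which lies in the admissible range $1/2 < \sigma < 1$. Nothing further is needed.
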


\begin{prop}
For square-free integers $N_1,N_2$, let $f\in S_{k_1}^{new}(N_1),
~g\in S_{k_2}^{new}(N_2)$ be normalized Hecke eigenforms with 
$f \neq g$ and $N:= \text{lcm}[N_1, N_2]$. Then for 
any $\epsilon>0$, one has 
\begin{equation}\label{eq8}
\sum_{n \le x,  \atop{(n,N)=1 \atop {n \text{  square-free}} }}
\lambda_f(n)\lambda_g(n)\log^2(x/n)
\ll_\epsilon 
\left[ N^2 \left(1+\frac{k_2-k_1}{2}\right)
\left( \frac{k_1+k_2}{2} \right) \right]^{1/4+\epsilon}
x^{3/4}.
\end{equation}
\end{prop}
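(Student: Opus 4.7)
The plan is to prove this estimate by a Perron-type Mellin inversion combined with a contour shift to the line $\Re(s)=3/4$, where Corollary~\ref{cor11} supplies the needed convexity bound. Set
$$
D_N(s):=\sum_{(n,N)=1}\lambda_f(n)\lambda_g(n)\,n^{-s},
$$
which converges absolutely for $\Re(s)>1$ by \eqref{eq3}. Using the standard identity
$$
\frac{1}{2\pi i}\int_{(c)}\frac{y^s}{s^3}\,ds
=\begin{cases}\tfrac12(\log y)^2 & \text{if }y\ge 1,\\ 0 & \text{if }0<y<1,\end{cases}\qquad(c>0),
$$
one obtains with no truncation error
$$
\sum_{n\le x,\,(n,N)=1}\lambda_f(n)\lambda_g(n)\log^2(x/n)
=\frac{1}{\pi i}\int_{(1+\epsilon)}D_N(s)\,\frac{x^s}{s^3}\,ds.
$$

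Next I would pass from $D_N(s)$ to $R(f,g;s)$. Since $\lambda_f$ and $\lambda_g$ are multiplicative, $R(f,g;s)=\prod_p D_p(s)$ with $D_p(s):=\sum_{n\ge 0}\lambda_f(p^n)\lambda_g(p^n)p^{-ns}$, so that $D_N(s)=R(f,g;s)G(s)$ with $G(s):=\prod_{p\mid N}D_p(s)^{-1}$. For $p\mid N_i$ (recall $N_i$ is square-free), Atkin--Lehner theory gives $\lambda_f(p^n)=\lambda_f(p)^n$ with $|\lambda_f(p)|\le p^{-1/2}$, and analogously for $g$, while at good primes the Satake parameters have modulus one. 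A short computation then shows that each local factor $D_p(s)^{-1}$ is a polynomial in $p^{-s}$ bounded by $(1+p^{-5/4})^2$ on $\Re(s)\ge 3/4$, so $G(s)$ is holomorphic for $\Re(s)>0$ and satisfies $|G(s)|\ll 1$ uniformly on the line $\Re(s)=3/4$.

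The core step is the contour shift from $\Re(s)=1+\epsilon$ down to $\Re(s)=3/4$. In this closed strip the integrand $D_N(s)x^s/s^3$ is holomorphic: $G(s)/s^3$ is holomorphic there, and $R(f,g;s)=(\zeta_N(2s)R(f,g;s))/\zeta_N(2s)$ is holomorphic because for $f\ne g$ the numerator is entire and $\zeta_N(2s)$ is holomorphic and non-vanishing for $\Re(s)>1/2$. The vertical decay needed to truncate the contour comes from the $1/s^3$ factor combined with the polynomial $|t|$-growth of $R(f,g;\sigma+it)$ supplied by \propref{lem20}. On the shifted line, Corollary~\ref{cor11} yields
$$
|R(f,g;3/4+it)|\ll_\epsilon A\,(3+|t|)^{1/2+\epsilon},\qquad
A:=\Bigl[N\Bigl(1+\tfrac{k_2-k_1}{2}\Bigr)\Bigl(\tfrac{k_1+k_2}{2}\Bigr)\Bigr]^{1/4+\epsilon},
$$
and consequently
$$
\Bigl|\sum_{n\le x,(n,N)=1}\lambda_f(n)\lambda_g(n)\log^2(x/n)\Bigr|
\ll_\epsilon A\,x^{3/4}\int_{-\infty}^{\infty}\frac{(3+|t|)^{1/2+\epsilon}}{(1+|t|)^3}\,dt\ll_\epsilon A\,x^{3/4},
$$
which is exactly \eqref{eq8}.

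The main (and essentially only) technical obstacle is the bookkeeping involved in passing from the $(n,N)=1$ restricted series $D_N(s)$ returned by Perron's formula to $R(f,g;s)$ for which the convexity bound is stated; once $G(s)$ is shown to be tame on $\Re(s)=3/4$, everything else is a standard convexity-plus-contour-shift calculation.
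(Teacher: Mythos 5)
Your proposal is correct and follows essentially the same route as the paper: Perron with the $1/s^3$ kernel (giving the $\tfrac12\log^2$ weight with no truncation error), a contour shift to $\Re(s)=3/4$, boundedness of the bad-prime Euler factors, and Corollary~\ref{cor11} on the shifted line. Your $G(s)=\prod_{p\mid N}D_p(s)^{-1}$ is precisely the paper's explicitly written finite Euler product $F(s)$, and the remaining details (holomorphy in the strip since $\zeta_N(2s)R(f,g;s)$ is entire for $f\ne g$ and $\zeta_N(2s)\ne 0$ for $\Re(s)>1/2$, and the $|t|^{-3}$ decay justifying the shift) match the paper's argument, which you actually spell out a bit more carefully than the source does.
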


\begin{proof}
For any $\epsilon>0$, we know by Deligne's bound that
$$
\lambda_f(n)\lambda_g(n)
\ll_{\epsilon} 
n^{\epsilon}.
$$
Hence by Perron's summation formula (see page 56 and 
page 67 of \cite{RM}), we have 
$$
\sum_{n\le x \atop{(n,N)=1 \atop {n \text{  square-free}} }}
\lambda_f(n)\lambda_g(n)\log^2(x/n)
=
\frac{1}{\pi i}\int_{1+\epsilon-i\infty}^{{1+\epsilon+i\infty}}
R^{\flat}(f,g; s)\frac{x^s}{s^3}~ds
$$
where 
$$
R^{\flat}(f,g; s)= \prod_{p \nmid N} 
\left( 1 + \frac{\lambda_f(p)\lambda_g(p)}{ p^{s}}\right),
\phantom{m} \Re(s) > 1.
$$ 
Further 
\begin{equation}\label{sq}
R(f,g; s) = R^{\flat}(f,g; s) H(s),
\end{equation}
where $H(s)$ has an Euler product 
which converges normally for $\Re(s) > 1/2$.
Now we shift the line of integration to $\Re(s)=3/4$.
Observing that there are no singularities
in the vertical strip bounded by the lines  
with $\Re(s) = 1+\epsilon$ and $\Re(s)=3/4$
and using Proposition \ref{lem20} along
with \eqref{sq}, we have 
$$
\sum_{n\le x, \atop {(n,N)=1\atop {n \text{  square-free}} }}
\lambda_f(n)\lambda_g(n)\log^2(x/n)
=
\frac{1}{\pi i}\int_{3/4-i\infty}^{{3/4+i\infty}}
R^{\flat}(f,g; s)\frac{x^s}{s^3}~ds. 
$$
The above observations combined with Corollary \ref{cor11} 
then implies that
$$
\sum_{n\le x, \atop {(n,N)=1\atop {n \text{  square-free}}}}
\lambda_f(n)\lambda_g(n)\log^2(x/n)
\ll_\epsilon N^{1/2+\epsilon}
\left(1+\frac{k_2-k_1}{2}\right)^{1/4+\epsilon}
\left(\frac{k_1+k_2}{2}\right)^{1/4+\epsilon}
x^{3/4}.
$$
This completes the proof of the proposition.
\end{proof}

Our next lemma will play a key role in proving \thmref{th1}.

\begin{lem}\label{lem13}
For square-free integers $N_1,N_2$, let $f\in S_{k_1}^{new}(N_1),
~g\in S_{k_2}^{new}(N_2)$ be normalized Hecke eigenforms with 
$f\neq g$ and $N:= \text{lcm}[N_1, N_2]$.
Also assume that for any $\alpha \le 2$,
$\lambda_f(p^{\alpha})\lambda_g(p^{\alpha}) \ge 0$
for all $p^{\alpha} \le x$. Then for
$x \ge exp{(c \log^2 (\sqrt{\mathfrak{q}(f)} + \sqrt{\mathfrak{q}(g)} ))}$,
we have 
$$
\sum_{n \le x, \atop{(n,N)=1 \atop{ n \text{  square-free}} }} 
\lambda_f(n)\lambda_g(n) \gg \frac{x}{\log^2 x}.
$$
Here $\mathfrak{q}(f),~ \mathfrak{q}(g)$ are analytic conductors of 
Rankin-Selberg L-functions of $f$ and $g$ respectively with
\begin{equation}\label{cond}
\mathfrak{q}(f)\le  k_1^2 N_1^2\log\log N_1
\phantom{m}\text{     and   } \phantom{m}
\mathfrak{q}(g) \le k_2^2 N_2^2\log\log N_2
\end{equation}
and $c >0$ is an absolute constant.
\end{lem}

\begin{proof}
Using Hecke relation \eqref{eq2}, for any prime $(p,N)=1$,
we know that 
$$
\lambda_f(p^2) \lambda_g(p^2) = 
[\lambda_f(p)\lambda_g(p)]^2 - \lambda_f(p)^2 
- \lambda_g(p)^2 + 1. 
$$
By hypothesis $\lambda_f(p^2)\lambda_g(p^2) \ge 0$ for all  
$p \le \sqrt{x}$. Hence for any $p\le \sqrt{x}$ and $(p,N)=1$, we have
$$
\lambda_f(p)^2\lambda_g(p)^2 \ge  \lambda_f(p)^2 
+ \lambda_g(p)^2 - 1. 
$$
This implies that 
$$
\sum_{p \le \sqrt{x}, \atop (p,N)=1} \lambda_f(p)^2\lambda_g(p)^2 
\ge \sum_{p \le \sqrt{x}, \atop (p,N)=1}\lambda_f(p)^2 
+ \sum_{p \le \sqrt{x}, \atop (p,N)=1}\lambda_g(p)^2 
- \sum_{p \le \sqrt{x}, \atop (p,N)=1} 1.
$$
Using standard analytic techniques and prime number theorem for
Rankin-Selberg $L$-functions of $f$ and $g$ respectively
(see \cite{IK}, pages 94-95, 110-111 for further details), we see that 
$$
\sum_{p \le \sqrt{x}, \atop (p,N)=1} \lambda_f(p)^2\lambda_g(p)^2 
\ge c_1\frac{\sqrt{x}}{\log x}
$$
provided $x \ge exp{(c \log^2 (\sqrt{\mathfrak{q}(f)} + \sqrt{\mathfrak{q}(g)} ))}$,
where $c, c_1 > 0$ are absolute constants and $\mathfrak{q}(f), \mathfrak{q}(g)$
are as in equation \eqref{cond}.
Using the hypothesis 
$$
\lambda_f(p) \lambda_g(p)\ge 0
\phantom{m}\text{  and  }\phantom{m} 
\lambda_f(p^2) \lambda_g(p^2) \ge 0
$$ 
for all $p, p^2 \le x$ and assuming that 
$x \ge exp{(c \log^2 (\sqrt{\mathfrak{q}(f)} + \sqrt{\mathfrak{q}(g)} ))}$, 
we have
\begin{eqnarray*}
\sum_{n \le x, \atop{(n, N)=1 \atop{n \text{  square-free}}} } 
\lambda_f(n)\lambda_g(n) 
& \ge & 
\frac{1}{2}\sum_{\substack{p,q \le \sqrt{x}, \atop (pq, N) = 1, \\ p \ne q}} 
\lambda_f(pq)\lambda_g(pq) \\
& = & 
\frac{1}{2} (\sum_{p \le \sqrt{x}, \atop (p,N) = 1} 
\lambda_f(p)\lambda_g(p))^2 
-\frac{1}{2} \sum_{p \le \sqrt{x}, \atop (p, N) = 1} 
\lambda_f(p)^2\lambda_g(p)^2. 
\end{eqnarray*}
Now using Deligne's bound, we get
\begin{eqnarray*}
\sum_{n \le x, \atop{(n, N)=1 \atop{n \text{  square-free}}} } 
\lambda_f(n)\lambda_g(n) 
& \ge &  
\frac{1}{2}\left(\sum_{p \le \sqrt{x}, \atop (p, N) = 1} 
\lambda_f(p)\lambda_g(p)
\frac{\lambda_f(p)\lambda_g(p)}{4}~\right)^2
~-~ 8 \sum_{p \le \sqrt{x}, \atop (p, N) = 1} 1 \\
& = & \frac{1}{32} 
\left(\sum_{p \le \sqrt{x}, \atop (p, N) = 1} 
\lambda_f(p)^2\lambda_g(p)^2\right)^2
~+~ O \left( \frac{\sqrt{x}}{\log x} \right) \\
& \gg & \frac{x}{\log^2 x}.
\end{eqnarray*}
This completes the proof of the lemma.
\end{proof}

We are now in a position to complete the proof of \thmref{th1}.
\begin{proof} 
Assume that $\lambda_f(p^{\alpha})\lambda_g(p^{\alpha})\ge 0$ for 
all $p^{\alpha}\le x$ with $\alpha \le 2$. 
By Lemma \ref{lem13}, we see that  
\begin{equation}\label{eq9}
\sum_{n\le x/2, \atop {(n,N)=1, \atop{n \text{  square-free}} }}
\lambda_f(n)\lambda_g(n)\log^2(x/n)
~\gg
\sum_{n\le x/2, \atop{ (n,N)=1, \atop{n \text{   square-free}} }}
\lambda_f(n)\lambda_g(n)
~\gg~ 
\frac{x}{\log^2 x}
\end{equation}
provided 
$x \ge exp{(c \log^2 (\sqrt{\mathfrak{q}(f)} + \sqrt{\mathfrak{q}(g)} ))}$,
where $c > 0, \mathfrak{q}(f), \mathfrak{q}(g)$ are as in 
Lemma \ref{lem13}. Now comparing \eqref{eq8} and \eqref{eq9}, 
for any $\epsilon > 0 $, we have 
$$
x
~\ll_\epsilon~
\text{max}
\left\{exp{(c \log^2 (\sqrt{\mathfrak{q}(f)} + \sqrt{\mathfrak{q}(g)} ))}, 
\left[ N^2
\left(1+\frac{k_2-k_1}{2}\right)
\left(\frac{k_1+k_2}{2}\right) \right]^{1+\epsilon} \right\},
$$ 
where $c, \mathfrak{q}(f), \mathfrak{q}(g)$ are as before.
Here we have used Lemma 4 of Choie and Kohnen \cite{CK}. 
This completes the proof of \thmref{th1}.
\end{proof}

\section{Proof of the theorem \ref{th_2}}

We now state a Lemma which we shall use to
prove \thmref{th_2}.

\begin{lem}\label{signchangelemma}
 Let $\{a_n\}_{n \in \N}$ and $\{b_m\}_{m \in \N}$
 be two sequences of real numbers such that 
 \begin{enumerate}
\item 
$a_n = O(n^{\alpha_1}), \phantom{m} b_m = O(m^{\alpha_2})$,
\item
$\sum_{n, m \le x}a_nb_m \ll x^{\beta}$,
\item
$\sum_{n,m \le x}a_n^2b_m^2 = cx+O(x^{\gamma})$,
\end{enumerate}
where $\alpha_1,\alpha_2,\beta, \gamma \ge 0$ 
and $c>0$ such that $ \max\{\alpha_1+\alpha_2+\beta, \gamma\}< 1$.  
Then for any $r$ satisfying 
$$
\max\{\alpha_1 + \alpha_2+\beta, \gamma\}<  r < 1,
$$ 
there exists a sign change 
among the elements of the sequence $\{a_nb_m\}_{n,m \in \N}$
for $n,m \in [x,x+x^{r}]$.  Consequently, for sufficiently large $x$,
the number of sign changes among the elements
of the sequence $\{a_nb_m\}_{n,m \in \N}$ with $n,m \le x$
are $\gg x^{1-r}$. 
\end{lem}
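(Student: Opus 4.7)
The plan is to argue by contradiction. Fix $r$ in the allowed range $\max\{\alpha_1+\alpha_2+\beta,\gamma\} < r < 1$, and suppose for a sufficiently large $x$ that the double sequence $\{a_nb_m\}$ has constant sign throughout the box $J := I \times I$, where $I = (x, x+x^r]$. Outside the degenerate case in which $a$ or $b$ vanishes identically on $I$, constant sign of the product on all of $J$ forces each of $\{a_n\}_{n \in I}$ and $\{b_m\}_{m \in I}$ to have one sign, and after replacing one of the sequences by its negative I may assume $a_n, b_m \ge 0$ on $I$. Abbreviate $\sigma = \sum_{n \in I} a_n$, $\tau = \sum_{m \in I} b_m$, $\alpha = \sum_{n \in I} a_n^2$, $\delta = \sum_{m \in I} b_m^2$, and let $A(y), B(y), A_2(y), B_2(y)$ denote the corresponding partial sums over $n \le y$.

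Then $\sigma\tau = \sum_{(n,m) \in J} a_nb_m = \sum_{(n,m) \in J} |a_nb_m| \ge 0$, and two applications of Cauchy--Schwarz yield
\[
\sigma\tau \;\le\; x^r\sqrt{\alpha\delta}.
\]
From hypothesis (3), evaluating at $x+x^r$ and at $x$ and expanding $(A_2(x)+\alpha)(B_2(x)+\delta) - A_2(x)B_2(x)$ gives the identity
\[
\alpha\delta + A_2(x)\delta + B_2(x)\alpha \;=\; cx^r + O(x^\gamma),
\]
whose three nonnegative summands are individually $\le cx^r + O(x^\gamma)$. In the opposite direction, hypothesis (1) together with the constant-sign condition on $J$ yields
\[
\alpha\delta \;=\; \sum_{J} a_n^2b_m^2 \;\le\; \max_{J}|a_nb_m|\sum_{J}|a_nb_m| \;\ll\; x^{\alpha_1+\alpha_2}\,\sigma\tau.
\]
Finally I would extract an upper bound for $\sigma\tau$ from hypothesis (2) via the telescoping identity
\[
\sigma\tau = A(x+x^r)B(x+x^r) - A(x+x^r)B(x) - A(x)B(x+x^r) + A(x)B(x),
\]
where the first and last terms are $\ll x^\beta$ by hypothesis (2), and the two mixed terms are handled by coupling the Cauchy--Schwarz bounds $|A(y)| \le \sqrt{yA_2(y)}$ and $|B(y)| \le \sqrt{yB_2(y)}$ with the second-moment information $A_2(y)B_2(y) = cy + O(y^\gamma)$, to conclude $\sigma\tau \ll x^\beta$.

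Combining the last two displays gives $\alpha\delta \ll x^{\alpha_1+\alpha_2+\beta}$, and substituting back into the identity for $\alpha\delta+A_2(x)\delta+B_2(x)\alpha$ forces $cx^r(1+o(1)) \ll x^{\alpha_1+\alpha_2+\beta}+x^\gamma$, contradicting the choice $r > \max\{\alpha_1+\alpha_2+\beta,\gamma\}$ for $x$ sufficiently large. The consequence, namely $\gg x^{1-r}$ sign changes of $\{a_nb_m\}_{n,m \le x}$, then follows by partitioning $(0,x]$ into $\sim x^{1-r}$ disjoint sub-intervals of length $x^r$ and invoking the main statement on each. The principal obstacle is the upper bound $\sigma\tau \ll x^\beta$: hypothesis (2) controls only the product $A(y)B(y)$ of the full partial sums, not the individual factors, so controlling the mixed cross terms $A(x+x^r)B(x)$ and $A(x)B(x+x^r)$ in the telescoping expansion requires careful interaction with the second-moment identity and may necessitate a case split according to the relative sizes of $A_2(x)$ and $B_2(x)$ (only their product is fixed by hypothesis (3)).
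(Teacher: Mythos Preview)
You have read the hypotheses as genuine double sums over the box $\{(n,m):n,m\le x\}$, but in the paper (and in the application to Theorem~\ref{th_2}, where the conditions verified are $\sum_{n\le x}\lambda_f(n)\lambda_g(n^2)\ll x^{5/7+o(1)}$ and $\sum_{n\le x}\lambda_f(n)^2\lambda_g(n^2)^2=cx+O(x^{17/18+\epsilon})$) the sums are along the diagonal: one is really studying the single sequence $c_n:=a_nb_n$. With that reading the paper's proof is the standard Meher--Murty three-liner: if $c_n$ has constant sign on $I=(x,x+x^r]$ then
\[
x^r \;\ll\; \sum_{n\in I}c_n^2 \;\le\; \max_{n\in I}|c_n|\cdot\Bigl|\sum_{n\in I}c_n\Bigr| \;\ll\; x^{\alpha_1+\alpha_2}\cdot x^{\beta},
\]
the outer estimates coming from differencing hypotheses~(3) and~(2) at $x$ and $x+x^r$. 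This contradicts $r>\alpha_1+\alpha_2+\beta$.

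Under your two-dimensional reading the argument does not close, and not only at the cross-term obstacle you already flagged. Your final implication is also faulty: from $\alpha\delta\ll x^{\alpha_1+\alpha_2+\beta}$ and the identity $\alpha\delta+A_2(x)\delta+B_2(x)\alpha=cx^r+O(x^\gamma)$ you cannot conclude $cx^r\ll x^{\alpha_1+\alpha_2+\beta}+x^\gamma$; the two remaining nonnegative terms $A_2(x)\delta$ and $B_2(x)\alpha$ could (and in general will) soak up the full $cx^r$. To force a contradiction you would need an independent upper bound on $A_2(x)\delta+B_2(x)\alpha$, which hypotheses~(1)--(3) in the 2D reading do not supply. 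So the gap is a misreading of the statement: once the sums are taken diagonally, your machinery with $\sigma,\tau,\alpha,\delta$ and the telescoping of $A(y)B(y)$ is unnecessary, and the argument collapses to the short chain above.
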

 
\begin{proof}
Suppose that for any $r$ satisfying 
$$
\max\{\alpha_1+\alpha_2+\beta, \gamma\}< r< 1,
$$ 
the elements of the sequence $\{ a_nb_m \}_{n,m \in \N}$ 
have same signs in $[x,x+x^{r}]$. This implies that
$$
x^{r}
~\ll~
\sum_{x\leq n,m\leq x+x^{r}}a_n^2b_m^2
~\ll~ 
x^{\alpha_1+\alpha_2}
\sum_{x\leq n,m\leq x+x^{r}}a_nb_m
~\ll~ 
x^{\alpha_1+\alpha_2+\beta},
$$
which is a contradiction. This completes the proof of the Lemma.
\end{proof}
 
Lemma \ref{signchangelemma} can be thought of as a generalization
of a Lemma of Meher and Ram Murty (see \cite[Theorem 1.1]{MM1})
when $b_1 =1$ and $b_m=0$ for all $m > 1$. 
We are now in a position to prove \thmref{th_2}.

\begin{proof}
In order to apply Lemma \ref{signchangelemma}, we need to verify the 
following conditions for the elements of the sequence 
$\{\lambda_f(n)\lambda_g(n^2)\}_{n\in\N}$.
Note that 
\begin{enumerate}
\item 
Ramanujan-Deligne bound implies that
$$
\lambda_f(n)\lambda_g(n^2)=O_\epsilon(n^{\epsilon})
$$
for all $n\in\N$. 
 
\item 
By a recent work of L\"u \cite[Theorem 1.2(2)]{L} (see also Kumari 
and Ram Murty \cite{KM}),  one has
$$
\sum_{n\leq x}\lambda_f(n)\lambda_g(n^2)
\ll x^{5/7}(\log x)^{-\theta/2},
$$ 
where $\theta=1-\frac{8}{3\pi}=0.1512\ldots$

\item 
In the same paper, L\"u (see \cite[Lemma 2.3(ii)]{L} as well as 
Kumari and Ram Murty \cite{KM}) also proved that
$$
\sum_{n\leq x}\lambda_f(n)^{2}\lambda_g(n^2)^{2}
= 
cx+O(x^{\frac{17}{18}+\epsilon}),
$$
where $c>0$. 
\end{enumerate}
Theorem \ref{th_2} now follows from 
Lemma \ref{signchangelemma} by choosing
$a_n =  \lambda_f(n)$ and $b_m := \lambda_g(m^2)$
for all $m, n \in \N$ and considering the sequence
$\{ a_n b_n\}_{ n\in \N}$. 
\end{proof}

\section{Proof of the theorem \ref{th_3}}

Using equation \eqref{eq3}, one can write 
$$
\lambda_f(p)=2\cos \alpha_p
~~~ \text{  and  }~~~
\lambda_g(p)=2\cos \beta_p
$$
with $0\le \alpha_p, \beta_p\le \pi$. Using the Hecke 
relation \eqref{eq2} for any prime $(p,N_1N_2)=1$, 
one has 
\begin{equation}\label{eq4}
\lambda_f(p^m)=
\begin{cases}
(-1)^m (m+1)  &\text{ if } \alpha_p=\pi;\\
m+1 & \text{ if } \alpha_p=0;\\
\frac{\sin (m+1)\alpha_p}{\sin \alpha_p} 
& \text{ if } 0< \alpha_p<\pi.
\end{cases}
\end{equation}
and 
\begin{equation}\label{eq5}
\lambda_g(p^m)=
\begin{cases}
(-1)^m (m+1)  &\text{ if } \beta_p=\pi;\\
m+1 & \text{ if } \beta_p=0;\\
\frac{\sin (m+1)\beta_p}{\sin \beta_p} 
& \text{ if } 0< \beta_p<\pi.
\end{cases}
\end{equation}

\thmref{th_3} now follows from the following four
cases. 

\noindent 
{\bf Case (1):} 
When $\alpha_p=0 \text{ or }\pi$ and $\beta_p=0 \text{ or }\pi$, 
then by the equation \eqref{eq4} and equation \eqref{eq5}, we see that  
$$
\{m\in \N~|~ a_{f}(p^m)a_{g}(p^m)\neq0\}=\N.
$$ 
In this case all elements of the sequence 
$\{a_f(p^m)a_g(p^m)\}_{ m \in \N}$ are non-zero.

\noindent 
{\bf Case (2):} 
Suppose that at least one of $\alpha_p, \beta_p$, say $\alpha_p=0 \text{ or }\pi$
and $\beta_p \in (0, \pi)$. If $\beta_p/\pi \not\in \Q$, there is nothing
to prove. Now if $\beta_p/\pi = \frac{r}{s}$ with $(r,s)=1$, then
we have
$$
\#\{m\leq x~|~ a_f(p^m)a_g(p^m) \ne 0\}
=\#\{m\leq x ~|~ a_g(p^m) \ne 0\}
= [x]- \left[\frac{x}{s} \right].
$$
Hence the set $\{m ~|~ a_f(p^m)a_g(p^m) \ne 0\}$ has postive density.

\noindent 
{\bf Case (3):} 
Suppose that $\alpha_p=\beta_p\in (0,\pi)$, i.e.
$\alpha_p/ \pi =\beta_p/\pi \in (0,1)$. 
If $\alpha_p/\pi \notin\Q$, then  
$a_{f}(p^m)a_{g}(p^m)\ne 0$ for all $m\in \N$ 
as $\sin m\alpha_p\ne 0$ for all $m\in \N$ .
If $\alpha_p / \pi \in\Q$, 
say $\alpha_p / \pi=\frac{r}{s}$, where $r,s\in \N$ with 
$(r,s)=1$, then we have $\sin m\alpha_p=0$ if and only 
if $m$ is an integer multiple of $s$ and hence  
\begin{equation*}
\#\{m\leq x: a_{f}(p^m)a_{g}(p^m)\neq0\}
=
\left[x \right]-\left[\frac{x}{s}\right].
\end{equation*}
Hence the set in \eqref{densityofset} has positive density.

\noindent 
{\bf Case (4):} 
Assume that $\alpha_p, \beta_p \in (0, \pi)$ with 
$\alpha_p\neq \beta_p$.
If both $\alpha_p / \pi, \beta_p / \pi~\notin~\Q$, then 
there is nothing to prove. 
Next suppose that one of them, say $\alpha_p / \pi \in\Q$ with 
$\alpha_p / \pi=\frac{r}{s}$ with $(r,s)=1$ and 
$\beta_p / \pi~\notin~\Q$. 
Then we have
$$
\#\{m\leq x: a_f(p^m)a_g(p^m) \ne 0\}
=\#\{m\leq x: a_f(p^m) \ne 0\}
= [x]- \left[\frac{x}{s} \right].
$$
Hence the set in \eqref{densityofset} has positive
density. 

Now let both $\alpha_p / \pi, \beta_p / \pi \in \Q$. If  
$\alpha_p / \pi = \frac{r_1}{s_1}$ and 
$\beta_p/\pi =\frac{r_2}{s_2}$ with 
$(r_i,s_i)=1$, for $1\leq i\leq 2$, then 
$$
\#\{m\leq x: a_f(p^m)a_g(p^m) \ne 0\}
=\# \left[\{m\leq x: a_f(p^m)\ne 0\} \cap \{m\leq x: a_g(p^m)\ne 0\}\right].
$$
Note that both $s_1$ and $s_2$ can not be $2$ as otherwise 
$\alpha_p= \beta_p$.
Since
\begin{eqnarray*}
\#\{m\leq x: a_f(p^m)a_g(p^m) = 0\} 
&=&
\#\left[\{m\leq x: a_f(p^m) = 0\} \cup \{m\leq x: a_g(p^m) = 0\} \right]\\
&\le& 
\left[\frac{x}{s_1} \right] + \left[\frac{x}{s_2}\right],
\end{eqnarray*}
the set in \eqref{densityofset} has positive density. 
This completes the proof of \thmref{th_3}.

\medskip

\section{Proof of Theorem \ref{th_4}}

Using Lemma \ref{lem6}, we see that for any $x \ge 2$ and 
$0<\delta< 1/2$
$$
\#\{p\le x : a_f(p^{m}) = 0 \text{ for some } m \ge 1\} 
\ll_{f,\delta} 
\frac{x}{(\log x)^{1+\delta}}~,
$$
where the implied constant depends only on $f$ and $\delta$. 
We have the same estimate for the form $g$ as well. 
Therefore for any $x \ge 2$ and $0<\delta< 1/2$, we have
\begin{equation}\label{preq}
\#\{p\leq x : a_f(p^{m})a_g(p^{m'})=0 \text{ for some } m, m'\ge 1\}
\ll_{f,g,\delta} 
\frac{x}{(\log x)^{1+\delta}},
\end{equation}
where the implied constant depends on $f, g$ and $\delta$. Hence 
\begin{eqnarray*}
&& \#\{p\leq x : a_f(p^{m})a_g(p^{m'})\ne 0 \text{ for all } m, m' \ge 1\} \\
&& = 
\pi(x) - \#\{p\leq x : a_f(p^{m})a_g(p^{m'})=0 \text{ for some } m, m' \ge 1\},
\end{eqnarray*}
where $\pi(x)$ denotes the number of primes up to $x$.
Now using prime number theorem as well as the identity 
\eqref{preq}, we have  
$$
  \#\{p\leq x : a_f(p^{m})a_g(p^{m'})\ne 0 \text{ for all } m, m' \ge 1\} 
  ~\sim~ \frac{x}{\log x}.
$$ 
Hence the set 
$$
\{p\in\mathcal{P} : a_f(p^{m})a_g(p^{m'})\ne 0 
\text{ for any integers } m, m' \ge 1 \}
$$
has natural density $1$.

\medskip

\section{Proof of theorem \ref{th_5}}

We keep the notations in this section as in section 5.
To prove \thmref{th_5}, we start by proving the
following Proposition.

\begin{prop}\label{th3}
Let 
$$
f(z) =\sum_{n=1}^\infty a_f(n)q^n \in S_{k_1}(N_1)
\phantom{m}\text{ and }\phantom{m}
g(z) =\sum_{n=1}^\infty a_g(n)q^n \in S_{k_2}(N_2)
$$ 
be two distinct normalized Hecke eigenforms. 
Then for any prime $p$ with $(p, N_1N_2)=1$, 
there exists an integer $m$ with $1 \le  m \le 4$ 
such that $a_{f}(p^m)a_{g}(p^m) \ne 0$. 
\end{prop}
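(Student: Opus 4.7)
The plan is to translate the simultaneous non-vanishing condition into a purely arithmetic statement using the Deligne parametrizations \eqref{eq4} and \eqref{eq5}, and then win by a short pigeonhole argument on the small prime set $\{2,3,5\}$.

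First, I would handle the ``automatically non-vanishing'' regimes. Writing $\lambda_f(p)=2\cos\alpha_p$ and $\lambda_g(p)=2\cos\beta_p$, equations \eqref{eq4} and \eqref{eq5} show that $\lambda_f(p^m)\neq 0$ for every $m\ge 1$ whenever $\alpha_p\in\{0,\pi\}$ or $\alpha_p/\pi\notin\Q$ (and similarly for $g$). If both $f$ and $g$ fall in this regime, $m=1$ works immediately. If only one of them does, say $f$, then I only need some $m\in\{1,2,3,4\}$ with $\lambda_g(p^m)\neq 0$, which I address together with the main case below.

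The heart of the matter is the remaining case where $\alpha_p,\beta_p\in(0,\pi)$ with $\alpha_p/\pi=r_1/s_1$ and $\beta_p/\pi=r_2/s_2$ in lowest terms, so $s_1,s_2\ge 2$. Then \eqref{eq4} and \eqref{eq5} reduce to
\[
\lambda_f(p^m)=0\iff s_1\mid (m+1),\qquad \lambda_g(p^m)=0\iff s_2\mid (m+1),
\]
and the task becomes: find $n=m+1\in\{2,3,4,5\}$ with $s_1\nmid n$ and $s_2\nmid n$.

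Assume for contradiction that every $n\in\{2,3,4,5\}$ is divisible by $s_1$ or $s_2$. Focusing on the three primes $n=2,3,5$ and using $s_i\ge 2$, each of $2,3,5$ must equal one of $s_1$ or $s_2$; but three distinct primes cannot be accommodated in a two-element set, contradiction. The residual mixed subcase (one of $\alpha_p/\pi,\beta_p/\pi$ rational with denominator $s\ge 2$, the other in $\{0,\pi\}$ or irrational) reduces to finding $n\in\{2,3,4,5\}$ with $s\nmid n$, which follows since $\gcd(2,3,4,5)=1$. There is no substantive obstacle here; the main point is simply to be careful that the explicit formulas \eqref{eq4}--\eqref{eq5} convert the problem into this tidy pigeonhole, and to enumerate the degenerate $\alpha_p,\beta_p\in\{0,\pi\}$ and irrational cases so that nothing is overlooked.
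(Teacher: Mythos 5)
Your proof is correct, and it takes a genuinely different route from the paper's. The paper's argument is computational: it reduces to the case $\lambda_f(p)=0$, which pins $\alpha_p=\pi/2$ exactly, then checks $m=2$, and if that still vanishes (forcing $\beta_p\in\{\pi/3,2\pi/3\}$) finishes by explicitly evaluating $\lambda_f(p^4)\lambda_g(p^4)$ in terms of exact sine values. Your argument replaces these explicit trigonometric evaluations with a combinatorial principle: after converting $\lambda_f(p^m)=0$ into the divisibility condition $s_1\mid(m+1)$ where $\alpha_p/\pi=r_1/s_1$ in lowest terms (and similarly for $g$), you observe that the three distinct primes $n=2,3,5$ cannot all be captured by just two forbidden denominators $s_1,s_2\ge 2$. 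This is cleaner, it avoids any reliance on computing specific values of $\sin$, it makes transparent why $m\le 4$ is the natural bound (three small primes against two denominators), and it in fact yields the slightly sharper conclusion that some $m\in\{1,2,4\}$ works. The only cost is the preliminary bookkeeping of the degenerate regimes $\alpha_p\in\{0,\pi\}$ and $\alpha_p/\pi\notin\Q$, which you handle correctly. Both proofs are short and elementary, but yours is arguably the more conceptual of the two.
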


\begin{proof}
Note that  $a_f(p^m)a_g(p^m) \ne 0$ 
is equivalent to $\sin(m+1)\alpha_p\sin(m+1)\beta_p
\ne 0$. If $a_f(p)a_g(p)\neq 0$, then we are done. 
Now suppose $a_f(p)a_g(p)= 0$, then 
either $a_f(p)=0$ or $a_g(p)=0$.

\noindent
{\bf Case (1):} 
If $a_f(p)=0=a_g(p)$, then $\alpha_p=\beta_p=\pi/2$. 
Hence we have 
$$
a_f(p^2)a_g(p^2)
=
p^{k_1+k_2-2}\ne 0.
$$

\noindent
{\bf Case (2): }
Suppose that at least one of $a_f(p), a_g(p) \ne 0$.
Without loss of generality assume that
$a_f(p)=0$ and $a_g(p)\ne  0$, then 
$\alpha_p=\pi/2$ and $\beta_p\ne \pi/2.$ Now if 
$\beta_p=0$ or $\pi$, then $a_g(p^2)= 3p^{k_2-1}$. 
Hence we have 
$$
a_f(p^2)a_g(p^2)
= - 3 p^{k_1 + k_2- 2} \ne 0.
$$
If $\beta_p \notin \{0,\pi/2, \pi\}$, then this implies that
 $a_g(p^2)=p^{(k_2-1)}\frac{\sin 3\beta_p}{\sin \beta_p}$.
Now if $a_f(p^2)a_g(p^2)=0$, then 
$\beta_p\in \{\pi/3, 2\pi/3 \}$ as $0<\beta_p<\pi$.
Then we have
$$
\frac {a_f(p^4)a_g(p^4)}{p^{2(k_1+k_2-2)}}
= \frac{2}{\sqrt{3}} \sin \frac{5\pi}{2}\sin\frac{ 5\pi}{3}
\phantom{m}\text{or} \phantom{m}
\frac {a_f(p^4)a_g(p^4)}{p^{2(k_1+k_2-2)}}
= \frac{2}{\sqrt{3}}\sin \frac{5\pi}{2} \sin\frac{10\pi}{3}.
$$
Since neither $\sin (5\pi/2)\sin (5\pi/3)$  nor $\sin (5\pi/2)\sin(10\pi/3)$
is equal to zero, this completes the proof of \propref{th3}. 
\end{proof}

\begin{proof}
We now complete the proof of the first part of
\thmref{th_5} by showing the existence of a prime $p \le 2\log N$ with 
$(p,N)=1$ and then using \propref{th3}. We know by a 
theorem of Rosser and Schoenfeld 
(see \cite[p. 70]{RS}) that 
$$
\sum_{p\le x}\log p>0.73 x 
\phantom{m} \text{for all }x\ge 41. 
$$
Using this, one can easily check that 
$$
\sum_{p\le x}\log p> \frac{x}{2} 
\phantom{m} \text{for all }x\ge 5. 
$$
Now consider the following product 
$$
\prod_{p\le 2\log N}p
=\text{exp}\left(\sum_{p\le 2\log N}\log p\right)>N,
$$
which confirms the existence of such a prime.
Proof of the second part of \thmref{th_5}
follows immediately by applying \propref{th3}.
\end{proof}

\medskip

\section{$\B$-free numbers and simultaneous non-vanishing 
in short intervals}
In this section, we first list certain properties of 
$\B$-free numbers and their distribution in short intervals 
to derive simultaneous non-vanishing of Hecke eigenvalues. 
Erd\"os \cite{E} introduced the notion of $\B$-free
numbers and showed the existence of these
numbers in short intervals. 
\begin{defn}
Let us assume that 
$$
\B:=\{b_1, b_2, ...\}\subset \N
$$ 
be such that 
$$
(b_i,b_j)=1 \text{ for }i\neq j
\phantom{m}\text{ and }\phantom{m}
\sum_{i\ge 1}\frac{1}{b_i}<\infty.
$$
One says that a number $n\in\N$ is $\B$-free 
if it is not divisible by any element of the set 
$\B$.
\end{defn}

The distribution of $\B$-free numbers in short intervals 
has been studied by several mathematicians 
(see \cite{BG}, \cite{SW}, \cite{ES},
\cite{W2}, \cite{Z}). 
Balog and Ono \cite{BO}
were first to use $\B$-free numbers to 
study non-vanishing of Hecke eigenvalues. 

For a non-CM cusp form $f\in S_k(N)$
with Fourier coefficients $\{ a_f(n) \}_{n \in \N}$, 
Serre (see \cite[page 383]{S}) defined the function
$$
i_f(n):= \text{max }\{m\in\N ~|~ a_f(n+j)=0 \text{ for all }0<j\le m\}
$$
which is now known as gap function. 
Alkan and Zaharescu \cite{AZ} proved that 
$$
i_{\Delta}(n) \ll_{\Delta} n^{1/4 + \epsilon}
$$
for Ramanujan $\Delta$-function.
Kowalski, Robert and Wu \cite{KRW}, using
distribution of $\B$-free numbers in short intervals
showed that 
$$
i_f(n)\ll_f n^{7/17+\epsilon}
$$
where $f \in S_{k}^{new}(N)$ is any normalized Hecke eigenform.
Recently, Das and Ganguly \cite{DG} showed that
$$
i_f(n) \ll_f n^{1/4 + \epsilon}
$$
for any $f \in S_k(1)$. 

In this article, we will study simultaneous non-vanishing
of Hecke eigenvalues using $\B$-free numbers.
This question was first considered by 
Kumari and Ram Murty \cite{KM}. 
We now introduce the set of $\B$-free numbers 
as constructed by Kowalski, Robert and Wu \cite{KRW}. 
These numbers will play an important role
in our work. 

Let $\p$ be a subset of $\mathcal{P}$ such that 
\begin{equation}\label{eq12}
\#(\p\cap [1,x])
\ll 
\frac{x^\rho}{(\log x)^{\eta_\rho}}
\end{equation}
where $\rho\in[0,1]$ and $\eta_\rho$'s are real numbers 
with $\eta_1>1$. Let us define 
\begin{equation}\label{eq13}
\B_\p:=\p \cup \{p^2 ~|~ p\in\mathcal{P}-\p\}.
\end{equation}
Write $\B_\p=\{b_i ~|~i\in\N\}$. Note that $(b_i,b_j)=1$ 
for all $b_i, b_j\in \B_\p$ with $b_i\neq b_j$. To show 
$\sum_{i\in\N}\frac{1}{b_i}<\infty$, it is enough to show that 
$\sum_{p\in\p}\frac{1}{p}<\infty$. Applying equation \eqref{eq12}
and partial summation formula, one has
$$
\sum_{\substack{p\le x, \\  p\in\p}}\frac{1}{p}
=
\frac{1}{x}\sum_{\substack{p\le x, \\ p\in\p}}
1+\int_2^x \frac{1}{t^2}(\sum_{\substack{p\le t, \\ p\in\p}}1) dt
\ll_\p \frac{x^{\rho-1}}{(\log x)^{\eta_\rho}}
+\int_2^x\frac{t^{\rho-2}}{(\log t)^{\eta_\rho}}dt
\ll_\p1.
$$
With these notations, 
Kowalski, Robert and Wu (see Corollary 10 of \cite{KRW})
proved the following Theorem.

\begin{thm}[Kowalski, Robert and Wu]\label{th15}
For any $\epsilon>0, x\ge x_0(\p,\epsilon)$ and 
$y\ge x^{\theta(\rho)+\epsilon}$, we have 
$$
\#\{x<n\le x+y ~|~ n\text{ is }\B_\p\text{-free}\}
\gg_{\p,\epsilon} y,
$$
where
\begin{equation}\label{eq14}
\theta(\rho):=
\begin{cases}
\frac{1}{4} & \text{ if }0\le \rho\le \frac{1}{3};\\
\frac{10\rho}{19\rho+7} & \text{ if }\frac{1}{3}< \rho\le \frac{9}{17};\\
\frac{3\rho}{4\rho+3} & \text{ if }\frac{9}{17}< \rho\le \frac{15}{28};\\
\frac{5}{16} & \text{ if }\frac{15}{28}< \rho\le \frac{5}{8};\\
\frac{22\rho}{24\rho+29} & \text{ if }\frac{5}{8}< \rho\le \frac{9}{10};\\
\frac{7\rho}{9\rho+8} & \text{ if }\frac{9}{10}< \rho\le 1.
\end{cases}
\end{equation}
\end{thm}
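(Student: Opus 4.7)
The plan is to count $\B_\p$-free numbers in $(x,x+y]$ by a truncated inclusion-exclusion. Since the elements of $\B_\p$ are pairwise coprime, an integer $n$ is $\B_\p$-free iff $b\nmid n$ for every $b\in\B_\p$, so
$$
\#\{x<n\le x+y : n \text{ is } \B_\p\text{-free}\}
=\sum_d \mu_\B(d)\left(\left\lfloor\frac{x+y}{d}\right\rfloor-\left\lfloor\frac{x}{d}\right\rfloor\right),
$$
where $d$ ranges over finite products of distinct elements of $\B_\p$ and $\mu_\B(d)=(-1)^k$ if $d$ is a product of $k$ distinct $b_i$'s. By \eref{eq13}, every such $d$ factors uniquely as $d=d_1 d_2^2$ with $d_1$ a squarefree product of primes in $\p$ and $d_2$ a squarefree product of primes outside $\p$, so the sieve decomposes into one piece over the sparse set $\p$ and one piece over squares of arbitrary primes.

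First I would extract the main term, which is $y\prod_{b\in\B_\p}(1-1/b)\gg_\p y$; convergence of $\sum_b 1/b$ follows by partial summation from \eref{eq12}, as already sketched in the excerpt. The error then splits according to the sizes of $d_1$ and $d_2$, and I would truncate the sieve at a level $D=D(\rho,\epsilon)$ to be optimized later. Terms with $d>x+y$ contribute nothing, so the real work concerns the range $D<d\le x+y$. For the squarefull factor $d_2^2$, this is controlled by the sharpest available estimates on the number of squarefull integers in short intervals, while for $d_1$ one uses the hypothesis $\#(\p\cap[1,T])\ll T^\rho/(\log T)^{\eta_\rho}$ together with partial summation to bound sums of the shape $\sum_{d_1\le T,\,d_1\text{ squarefree in }\p}1$.

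The decisive step is the balance between these two inputs, and this is where the piecewise $\theta(\rho)$ of \eref{eq14} arises. Each breakpoint corresponds to the range in which a particular short-interval estimate dominates: the Filaseta--Trifonov exponent $1/4$ for $\rho\le 1/3$, and successive refinements based on exponent pairs $(k,\ell)$ for squarefree and squarefull numbers in short intervals for the larger ranges, each combined via H\"older's inequality with the sparsity bound on $\p$. The main obstacle is this six-fold case analysis: in each regime one must choose the truncation $D$ and allocate the exponents between the squarefull contribution and the $\p$-contribution so that the total error is $O(x^{\theta(\rho)+\epsilon})$, and verify that the logarithmic saving $\eta_\rho>1$ is sufficient to keep $\sum_{p\in\p}1/p$ convergent in the critical case $\rho=1$.
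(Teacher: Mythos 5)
This theorem is not proved in the paper at all: the authors quote it verbatim as Corollary~10 of Kowalski, Robert and Wu \cite{KRW}, and use it as a black box in Section~8. So there is no ``paper's own proof'' for you to have matched; the relevant comparison is with the argument in \cite{KRW} itself.

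Your outline does capture the overall architecture of that argument. The observations that $\B_\p$-free means ``squarefree and coprime to $\p$,'' that a truncated inclusion--exclusion over the pairwise-coprime moduli $b\in\B_\p$ gives a main term $y\prod_{b}(1-1/b)\gg_\p y$, and that the error splits into a ``squares of ordinary primes'' piece (short-interval squarefree counting, exponent pairs, Filaseta--Trifonov) and a ``sparse primes from $\p$'' piece (controlled by the density hypothesis \eqref{eq12}) -- all of this is faithful to the structure of Kowalski--Robert--Wu's proof. The factorization $d=d_1d_2^2$ you introduce is exactly the right way to separate the two contributions.

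But as it stands this is a plan, not a proof, and the gap is precisely where the theorem's content lives. Everything up to and including the main-term extraction is soft; the entire difficulty of \thmref{th15} is in proving that the error term is $o(y)$ once $y\ge x^{\theta(\rho)+\epsilon}$ with $\theta$ given by the six-branch formula \eqref{eq14}. You write ``the main obstacle is this six-fold case analysis'' and then stop. To actually produce the breakpoints $\rho=\tfrac13,\tfrac{9}{17},\tfrac{15}{28},\tfrac58,\tfrac{9}{10}$ and the corresponding exponents, you would have to (a)~choose the truncation level $D$ as a function of $x,y,\rho$, (b)~invoke specific short-interval estimates for the number of $n\in(x,x+y]$ divisible by $p^2$ for $p$ in a dyadic range (this is where different exponent pairs take over in different ranges, not a single Filaseta--Trifonov bound), (c)~bound the number of $n\in(x,x+y]$ divisible by a large $p\in\p$ using $\#(\p\cap[1,T])\ll T^\rho/(\log T)^{\eta_\rho}$ and the trivial/divisor estimates, and (d)~optimize $D$ so that all pieces are $\ll y/(\log x)$, verifying in each of the six regimes which constraint is binding. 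None of this is carried out, and the appeal to ``H\"older's inequality with the sparsity bound'' is too vague to check. Without steps (a)--(d) the argument does not establish the particular function $\theta(\rho)$, only the qualitative statement that some $\theta<1$ works, which is much weaker than the cited theorem. If you want to keep this as a self-contained proof you would need to reproduce Propositions~7--9 and the ensuing optimization from \cite{KRW}; otherwise it is cleaner to do what the paper does and cite the result.
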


We now study simultaneous non-vanishing in 
short arithmetic progression using the distribution of $\B$-free numbers. 
The question of the distribution of 
$\B$-free numbers in short arithmetic progression was first considered by 
Alkan and Zaharescu \cite{AZ1}. In this direction, Wu and Zhai (see 
Proposition 4.1 of \cite{WZ}) have the following result
about distribution of $\B$-free numbers in short arithmetic progression. 

\begin{thm}[Wu and Zhai]\label{th15A}
Let $\B_\p$ be as in \eqref{eq13}. For any $\epsilon>0, x\ge x_0(\p,\epsilon)$,  
$y\ge x^{\psi(\rho)+\epsilon}$ and $1\le a\le q\le x^\epsilon$ with $(a,q)=1$, 
one has
$$
\#\{x<n\le x+y : n\text{ is }\B_\p\text{-free} \text{ and } n\equiv a (\text{mod } q)\}
\gg_{\p,\epsilon} y/q,
$$
where
\begin{equation}\label{eq_14}
\psi(\rho):=
\begin{cases}
\frac{29\rho}{46\rho+19} & \text{ if }~\frac{190}{323}< \rho\le \frac{166}{173};\\
\frac{17\rho}{26\rho+12} & \text{ if }~\frac{166}{173}< \rho\le 1.
\end{cases}
\end{equation}
\end{thm}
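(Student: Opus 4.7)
The plan is to adapt the proof of \thmref{th15} (Corollary 10 of \cite{KRW}) to incorporate the arithmetic progression condition $n \equiv a \pmod q$. The starting point is the inclusion--exclusion identity that detects the $\B_\p$-free property. Since the elements of $\B_\p$ are pairwise coprime, one has
$$
\mathbf{1}_{\B_\p\text{-free}}(n) \;=\; \sum_{d \mid n} \mu_{\B_\p}(d),
$$
where $\mu_{\B_\p}(d) = (-1)^{|T|}$ if $d = \prod_{i \in T} b_i$ is a (necessarily unique) product of distinct elements of $\B_\p$, and $0$ otherwise. Summing over $n \in (x, x+y]$ with $n \equiv a \pmod q$ and interchanging gives
$$
S(x,y,a,q) \;=\; \sum_{d} \mu_{\B_\p}(d) \left( \frac{y \cdot \mathbf{1}_{(d,q) \mid a}}{\mathrm{lcm}(d,q)} + O(1) \right).
$$

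First I would isolate the main term by truncating at a threshold $d \le D$. Since $(a,q)=1$, only $d$ with $(d,q)=1$ contribute to the main term, and the convergence $\sum 1/b_i < \infty$ (established in the paragraph preceding \thmref{th15}) implies that the partial Euler product over $b \in \B_\p$ coprime to $q$ is bounded below uniformly in $q \le x^\epsilon$. This yields
$$
\frac{y}{q} \prod_{\substack{b \in \B_\p \\ (b,q)=1}} \left(1 - \frac{1}{b}\right) \;\gg_{\p}\; \frac{y}{q},
$$
while the $O(1)$ remainders from $d \le D$ cumulate to $O(D)$, absorbed into the main term provided $D$ is chosen as a small power of $y/q$.

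The crux is controlling the tail $d > D$. One partitions $\B_\p$ into primes $p \in \p$ and prime squares $p^2$ with $p \notin \p$, and further dyadically decomposes by the size of $p$. The contribution from $p \in \p$ is controlled via \eref{eq12} and dyadic decomposition. For prime squares, the trivial bound $y/(p^2 q) + O(1)$ is insufficient for large $p$, and one must exploit cancellation in the counting function
$$
\#\{n \in (x, x+y] : n \equiv a \pmod q,\ p^2 \mid n\}
$$
by Fourier expansion of the short-interval indicator, Poisson summation modulo $p^2 q$, and van der Corput / exponent-pair bounds on the resulting exponential sums.

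The main obstacle, and the source of the piecewise $\psi(\rho)$ in \eref{eq_14}, is to obtain these exponential sum bounds uniformly in $q \le x^\epsilon$. Different ranges of $\rho$ make different exponent pairs optimal after one balances the sieving threshold $D$ against the output of the exponential sum estimate; the break point $\rho = 166/173$ is exactly where the optimal exponent pair switches, producing the two expressions for $\psi(\rho)$ in \eref{eq_14}. Once $y \ge x^{\psi(\rho)+\epsilon}$, every error contribution becomes $o(y/q)$, so the main term dominates and one obtains the stated lower bound $\gg_{\p,\epsilon} y/q$.
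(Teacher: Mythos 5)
The paper does not prove \thmref{th15A} at all: it is quoted verbatim from the literature, explicitly attributed to Proposition~4.1 of Wu and Zhai~\cite{WZ}, and is used as a black box in the proof of \thmref{th16}. So there is no in-paper argument against which to compare your sketch.

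Taken on its own terms, your outline correctly identifies the standard framework (M\"obius-type inclusion--exclusion for $\B_\p$-free numbers with pairwise coprime $b_i$, truncation at a threshold $D$, a lower bound $\prod_{(b,q)=1}(1-1/b) \gg 1$ for the main term, dyadic decomposition of the tail, and exponential sum estimates for the contribution of the prime squares $p^2 \notin \p$), and this is indeed the general shape of the arguments in both \cite{KRW} and \cite{WZ}. But the sketch is not a proof of the stated theorem: everything that actually produces the specific function $\psi(\rho)$ in \eqref{eq_14} is only gestured at. You never write down the exponential sums, never state which exponent pairs or van der Corput iterates are being used, and never carry out the balancing of $D$ against the exponential-sum saving that would yield $29\rho/(46\rho+19)$ or $17\rho/(26\rho+12)$; the claim that ``the break point $\rho = 166/173$ is exactly where the optimal exponent pair switches'' is a plausible post hoc rationalization rather than a derivation. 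The sketch also leaves unaddressed why the statement carries the hypothesis $\rho > 190/323$ (for smaller $\rho$ the quoted result gives no information), and it asserts without justification that the cumulated $O(1)$ remainders and all tail contributions are $o(y/q)$ once $y \ge x^{\psi(\rho)+\epsilon}$ --- this is precisely the content that must be proved, not assumed. In short, what you have written is a reasonable description of the method one expects to find in \cite{WZ}, but not an argument that establishes \eqref{eq_14}.
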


Using above results, we now have the following
non-vanishing Theorem for certain multiplicative function. 
\begin{thm}\label{th16}
Let $f : \N \to \C$ be a multiplicative function and let 
$N\ge 1$ be a positive integer. Define  
\begin{equation}\label{eq15}
\p_{f,N}:=\{p\in \mathcal{P} ~|~ f(p)=0\}\cup \{p\in\mathcal{P} ~|~ p|N\}.
\end{equation}
Also assume that $\p_{f,N}$ satisfies condition \eqref{eq12}. 
Then 
\begin{enumerate}
\item
For any $\epsilon>0, x\ge x_0(\p_{f, N},~\epsilon)$ and 
$y\ge x^{\theta(\rho)+\epsilon}$, we have 
$$
\#\{x<n\le x+y ~|~ (n,N)=1,~n \text{ square-free and } f(n)\neq 0\}
\gg_{\p_{f, N},~\epsilon} y,
$$
where $\theta(\rho)$ is as in \eqref{eq14}.

\item
For any $\epsilon>0, x\ge x_0(\p_{f, N}, ~\epsilon)$,  
$y\ge x^{\psi(\rho)+\epsilon}$ and $1\le a\le q\le x^\epsilon$ 
with $(a,q)=1$, we have 
$$
\#\{x<n\le x+y : (n,N)=1,~n \text{ square-free, } n\equiv a (\text{mod } q) 
\text{ and } f(n)\neq 0\}
\gg_{\p_{f, N},~\epsilon} y/q,
$$
where $\psi(\rho)$ is as in \eqref{eq_14}.
\end{enumerate}
\end{thm}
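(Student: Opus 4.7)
The plan is to realize the counting function in the statement as the counting function of $\B$-free integers for the set $\B := \B_{\p_{f,N}}$ built via \eqref{eq13}, and then invoke \thmref{th15} and \thmref{th15A} essentially verbatim.

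First I would verify that $\B_{\p_{f,N}}$ is a legitimate $\B$-free parameter set: by construction its elements are pairwise coprime (they are either primes in $\p_{f,N}$ or prime squares $p^2$ with $p \notin \p_{f,N}$), and the hypothesis \eqref{eq12} on $\p_{f,N}$ together with the partial summation argument written out in the paragraph preceding \thmref{th15} gives $\sum_{p \in \p_{f,N}} 1/p < \infty$, whence $\sum_{b \in \B_{\p_{f,N}}} 1/b < \infty$.

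The central step is the following equivalence: an integer $n$ is $\B_{\p_{f,N}}$-free if and only if $(n,N)=1$, $n$ is square-free, and $f(n) \neq 0$. If $n$ is $\B$-free, then no prime $p \in \p_{f,N}$ divides $n$, which forces $(n,N)=1$ (because primes dividing $N$ lie in $\p_{f,N}$) and $f(p) \neq 0$ for every prime $p \mid n$; moreover no $p^2$ with $p \notin \p_{f,N}$ divides $n$, so combined with the previous observation $n$ is square-free. Multiplicativity then yields $f(n) = \prod_{p \mid n} f(p) \neq 0$. Conversely, if $n$ is square-free, coprime to $N$ and $f(n) \neq 0$, then every prime $p \mid n$ lies outside $\p_{f,N}$ (otherwise $p \mid N$ or $f(p)=0$, the latter killing $f(n)$ since $n$ is square-free and $f$ is multiplicative), and $p^2 \nmid n$ since $n$ is square-free; hence $n$ is $\B_{\p_{f,N}}$-free.

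With the equivalence in hand, part (1) follows immediately from \thmref{th15} applied to $\B_{\p_{f,N}}$, and part (2) follows from \thmref{th15A}. There is no real obstacle beyond the identification above; both lower bounds come for free once one has arranged the $\B$-set so that being $\B$-free is equivalent to the three simultaneous conditions appearing in the statement.
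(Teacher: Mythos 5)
Your proposal is correct and matches the paper's approach exactly: the paper's proof simply constructs $\B_{\p_{f,N}}$ and cites \thmref{th15} and \thmref{th15A}, leaving the equivalence between ``$n$ is $\B_{\p_{f,N}}$-free'' and ``$(n,N)=1$, $n$ square-free, $f(n)\neq 0$'' implicit. You have merely written out that routine equivalence, which is the only content the paper's two-line proof omits.
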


\begin{proof}
Define 
$$
\B_{\p_{f,N}} := \p_{f, N} \cup \{p^2 ~|~ p\in\mathcal{P}-\p_{f,N}\}.
$$ 
Then first part of \thmref{th16} now follows from \thmref{th15}.
Applying \thmref{th15A}, we get the second part of \thmref{th16}.
\end{proof}

As an immediate corollary, we have 
\begin{cor}\label{ellipticurve}
Let $E_1/\Bbb Q$ and $E_2/\Bbb Q$ be two 
non-CM elliptic curves which have the 
same conductor $N$. Let 
$$
L(E_{i}, s)=\sum_{n=1}^{\infty}a_{E_{i}}(n)n^{-s},\ \ i=1,2
$$
be their Hasse-Weil $L$-functions. If 
$f_{E_{i}}(z)=\sum_{n=1}^{\infty}a_{E_{i}}(n)q^{n}$ for 
$i=1,2$ are the associated weight two newforms, then 
\begin{enumerate}
\item
for any $\epsilon>0$ and $y\geq x^{33/94+\epsilon}$,
we have 
$$
\#\{x<n<x+y ~|~ n \text{ is square-free and }a_{E_{1}}(n)a_{E_{2}}(n)\neq 0\}
\gg_{E_{1},E_{2},\epsilon} y.
$$
\item 
For any $\epsilon>0, x\ge x_0(E_1, E_2, \epsilon)$,  
$y\ge x^{87/214+\epsilon}$ and $1\le a\le q\le x^\epsilon$ 
with $(a,q)=1$, we have 
\begin{eqnarray*}
\#\{x<n\le x+y &|& (n,N)=1,~n \text{ is square-free and }, n\equiv a (\text{mod } q) 
\text{ and } a_{E_1}(n)a_{E_2}(n)\neq 0\} \\
&& \gg_{E_1, E_2, \epsilon} y/q.
\end{eqnarray*}
\end{enumerate}
\end{cor}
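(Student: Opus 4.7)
The plan is to apply \thmref{th16} to the multiplicative function $f(n) := a_{E_1}(n)\, a_{E_2}(n)$, which is multiplicative because each $a_{E_i}$ is multiplicative, being the Fourier coefficient function of the weight two newform $f_{E_i}$ associated to $E_i$. Since $E_1$ and $E_2$ share the same conductor $N$, the level appearing in \thmref{th16} is exactly $N$; moreover, for $n$ square-free with $(n, N) = 1$ one has $f(n) = \prod_{p \mid n} f(p)$, so that $a_{E_1}(n)\, a_{E_2}(n) \neq 0$ if and only if $f(p) \neq 0$ for every prime $p \mid n$.

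The next step is to verify condition \eqref{eq12} with $\rho = 3/4$. Observe that
$$
\p_{f, N} \subseteq \{p \in \mathcal{P} : a_{E_1}(p) = 0\} \cup \{p \in \mathcal{P} : a_{E_2}(p) = 0\} \cup \{p : p \mid N\},
$$
so the input needed is a bound of the form $\#\{p \leq x : a_E(p) = 0\} \ll_E x^{3/4}$ for a non-CM elliptic curve $E/\Q$. Such a polynomial bound follows from Serre's open image theorem combined with an effective Chebotarev estimate applied to the mod-$\ell$ Galois representations attached to $E$ (cf.\ the discussion in \cite{KRW}). Applied to $E_1$ and $E_2$ and combined with the trivial bound $O(\log N)$ on primes dividing $N$, this yields $\#(\p_{f, N} \cap [1, x]) \ll_{E_1, E_2} x^{3/4}$, confirming \eqref{eq12} with $\rho = 3/4$.

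Finally, one reads off the exponents from \eqref{eq14} and \eqref{eq_14}. Since $3/4 \in (5/8, 9/10]$ and $3/4 \in (190/323, 166/173]$, a direct computation gives
$$
\theta(3/4) = \frac{22 \cdot (3/4)}{24 \cdot (3/4) + 29} = \frac{33}{94}
\phantom{m}\text{and}\phantom{m}
\psi(3/4) = \frac{29 \cdot (3/4)}{46 \cdot (3/4) + 19} = \frac{87}{214}.
$$
Applying \thmref{th16}(1) and \thmref{th16}(2) respectively then yields the two claimed lower bounds, with the equivalence noted in the first paragraph translating ``$f(n) \neq 0$'' into ``$a_{E_1}(n)\, a_{E_2}(n) \neq 0$''. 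The main (and really only) obstacle in this plan is securing the polynomial bound $\#\{p \leq x : a_E(p) = 0\} \ll x^{3/4}$, which goes beyond the mere logarithmic saving given by Lemma \ref{lem6} and is precisely what pins down the specific exponents $33/94$ and $87/214$ appearing in the corollary.
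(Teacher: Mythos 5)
Your overall structure matches the paper's proof: apply \thmref{th16} to the multiplicative function $f(n) = a_{E_1}(n)a_{E_2}(n)$, verify condition~\eqref{eq12} with $\rho = 3/4$, and read off $\theta(3/4) = 33/94$ and $\psi(3/4) = 87/214$ from \eqref{eq14} and \eqref{eq_14}. The exponent computations are correct, as is the reduction to bounding $\#\{p \le x : a_E(p) = 0\}$.

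However, the key input is misattributed, and the method you cite would not deliver the needed bound. You claim that $\#\{p \le x : a_E(p) = 0\} \ll_E x^{3/4}$ for a non-CM elliptic curve follows from ``Serre's open image theorem combined with an effective Chebotarev estimate.'' That route does not give a power saving unconditionally: Serre's argument via effective Chebotarev and the large sieve yields only logarithmic savings of the form $x/(\log x)^{1+\delta}$ (this is precisely the type of bound in Lemma~\ref{lem6}, and it corresponds to $\rho = 1$, not $\rho = 3/4$). A conditional (GRH) version of Chebotarev does better, but the corollary as stated is unconditional. The correct source, and the one the paper uses, is Elkies' theorem on the distribution of supersingular primes \cite{Elkies}: for a non-CM elliptic curve $E/\Q$, the primes $p$ of good reduction with $a_E(p) = 0$ are precisely the supersingular primes, and Elkies proved unconditionally that there are $O_E(x^{3/4})$ of them up to $x$, by a method (involving CM liftings and class number bounds for imaginary quadratic orders) entirely different from Chebotarev. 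Without this specific theorem you cannot justify $\rho = 3/4$, and hence cannot obtain the exponents $33/94$ and $87/214$; you would instead land back at the weaker exponents $7/17$ and $17/38$ as in Corollary~\ref{cor18}.
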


\begin{proof}
Let $\pi_{E}(x)$ be the number of supersingular 
primes up to $x$ for a non-CM elliptic curve $E/\Q$.
By the work of Elkies \cite{Elkies} , we have
$$
\#\{p\leq x : a_{E}(p)=0\}
\ll_{E} x^{3/4}.
$$ 
Considering $f(n):=a_{E_1}(n)a_{E_2}(n)$, one easily sees 
that $\p_{f,N}$ satisfies condition \eqref{eq12} with $\rho=3/4$
and $\eta_{\rho} = 0$. 
Now by using \thmref{th16}, we get the Corollary.
\end{proof}

Kumari and Ram Murty  have proved similar results for
non-CM cusp forms which are newforms and normalized 
Hecke eigenforms of weight $k>2$. 

As a second corollary, we have the following simultaneous non-vanishing 
result for coefficients of symmetric power $L$-functions in short intervals. 

To state the corollary,  we need to introduce few more notations. 
Let $f\in S_k^{new}(N)$ be a normalized Hecke eigenform
with Fourier coefficients $\{ a_f(n) \}_{ n\in \N}$.
Set $\lambda_f(n)=a_f(n)/n^{(k-1)/2}$ and suppose that
for $p\nmid N$,  $\alpha_{f,p},\beta_{f,p}$ are the Satake 
$p$-parameter of $f$. Then 
the un-ramified $m$-th symmetric power $L$-function of $f$
is defined as follows: 
$$
L_{unr}(sym^m f,s)
:=\prod_{p\nmid N}\prod_{0\le j\le m}(1-\alpha_{f,p}^j\beta_{f,p}^{m-j}p^{-s})^{-1}
=:\sum_{n\ge 1}\lambda_f^{(m)}(n)n^{-s}.
$$
We now have the following corollary.
\begin{cor}\label{cor18}
Let $f \in S_{k_1}^{new}(N_1)$ and $g \in S_{k_2}^{new}(N_2)$ 
be normalized non-CM Hecke eigenforms. 
Let $N:= \text{lcm}[N_1,N_2]$. 
Then  
\begin{enumerate}
\item
for any $\epsilon>0$, $x\ge x_0(f,g,\epsilon)$ 
and $y\ge x^{7/17+\epsilon}$, we have 
$$
\#\{x<n\le x+y ~|~ n \text{ is square-free and }
\lambda_f^{(m)}(n)\lambda_g^{(m)}(n)\neq 0\}
\gg_{f,g,m,\epsilon}y.
$$

\item
For any $\epsilon>0, x\ge x_0(f, g,\epsilon)$,  
$y\ge x^{17/38+\epsilon}$ and $1\le a\le q\le x^\epsilon$ 
with $(a,q)=1$, we have 
$$
\#\{x<n\le x+y ~|~ (n,N)=1,~n \text{ square-free, } n\equiv a (\text{mod } q) 
\text{ and } \lambda_f^{(m)}(n)\lambda_g^{(m)}(n)\neq 0\}
\gg_{f, g,\epsilon} y/q.
$$
\end{enumerate}
\end{cor}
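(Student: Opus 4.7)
The plan is to apply \thmref{th16} to the multiplicative function $F(n) := \lambda_f^{(m)}(n)\lambda_g^{(m)}(n)$. Since $\lambda_f^{(m)}$ and $\lambda_g^{(m)}$ are the Dirichlet coefficients of $L$-functions with Euler products, each is multiplicative, and hence so is $F$. The corollary then reduces to verifying the hypothesis \eqref{eq12} of \thmref{th16} for the associated exceptional set $\p_{F,N}$ defined in \eqref{eq15}, after which parts (1) and (2) follow immediately by reading off the exponents $\theta(\rho)$ and $\psi(\rho)$ at $\rho = 1$.

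To verify \eqref{eq12}, first I would unpack the relation between $\lambda_f^{(m)}(p)$ and $\lambda_f(p^m)$ at good primes. Writing the Satake parameters at $p \nmid N_1$ as $\alpha_{f,p} = e^{i\alpha_p}$ and $\beta_{f,p} = e^{-i\alpha_p}$ with $\alpha_p \in [0, \pi]$, one computes
$$
\lambda_f^{(m)}(p) = \sum_{j=0}^{m} \alpha_{f,p}^{\,j} \beta_{f,p}^{\,m-j} = \sum_{j=0}^{m} e^{i(2j-m)\alpha_p} = \frac{\sin((m+1)\alpha_p)}{\sin \alpha_p} = \lambda_f(p^m),
$$
in agreement with \eqref{eq4}, and analogously $\lambda_g^{(m)}(p) = \lambda_g(p^m)$ for $p \nmid N_2$. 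Consequently
$$
\p_{F,N} \subset \{p : p \mid N\} \cup \{p \nmid N : \lambda_f(p^m) = 0\} \cup \{p \nmid N : \lambda_g(p^m) = 0\},
$$
and since $f$ and $g$ are non-CM, Lemma~\ref{lem6} applied with $\nu = m$ (to each of $f$ and $g$ separately) gives
$$
\#(\p_{F,N} \cap [1,x]) \ll_{f,g,m,\delta} \frac{x}{(\log x)^{1+\delta}}
$$
for any $0 < \delta < 1/2$. This is precisely condition \eqref{eq12} with $\rho = 1$ and $\eta_1 = 1 + \delta > 1$.

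With $\rho = 1$, the piecewise formulas give $\theta(1) = 7/17$ from \eqref{eq14} and $\psi(1) = 17/38$ from \eqref{eq_14}, matching the exponents stated in the corollary. Substituting into \thmref{th16}(1) produces part (1), while \thmref{th16}(2) produces part (2). The one conceptual step that requires care --- and in my view the main obstacle --- is the identification $\lambda_f^{(m)}(p) = \lambda_f(p^m)$, which converts a question about the vanishing of the symmetric power $L$-coefficients into one about vanishing of Hecke eigenvalues at prime powers, so that the Kowalski--Robert--Wu bound of Lemma~\ref{lem6} becomes directly applicable. Everything after this identification is routine bookkeeping inside the framework provided by \thmref{th16} and \thmref{th15A}.
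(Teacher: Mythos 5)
Your proof is correct and follows the same route as the paper: set up the multiplicative function $F(n)=\lambda_f^{(m)}(n)\lambda_g^{(m)}(n)$, use the identity $\lambda_f^{(m)}(p)=\lambda_f(p^m)$ together with Lemma~\ref{lem6} (applied to each of $f$ and $g$ with $\nu=m$) to check hypothesis \eqref{eq12} with $\rho=1$, and then read off $\theta(1)=7/17$ and $\psi(1)=17/38$ from \eqref{eq14} and \eqref{eq_14} before invoking \thmref{th16}. The paper's proof is just a more condensed version of the same argument; your explicit verification of $\lambda_f^{(m)}(p)=\lambda_f(p^m)$ via the Satake parametrization is the step the paper states without proof.
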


\begin{proof}
Let 
$$
\p_{f,g,m}:=\{p\in\mathcal{P} ~|~ p|N \text{ or } 
\lambda_f^{(m)}(p)\lambda_g^{(m)}(p)= 0\}
$$
Since $\lambda_f^{(m)}(p)=\lambda_f(p^m)$, using 
Lemma \ref{lem6}, we see that $\p_{f,g,m}$ satisfies condition 
\eqref{eq12}. Note that $f(n):= \lambda_f^{(m)}(n)\lambda_g^{(m)}(n)$ 
is a multiplicative function and hence we can apply \thmref{th16}, 
to complete the proof of Corollary \ref{cor18}.
\end{proof}

\begin{rmk}
Note that Corollary \ref{cor18} implies simultaneous 
non-vanishing of Hecke eigenvalues 
in sparse sequences. More precisely, 
let $f \in S_{k_1}^{new}(N_1)$ and $g \in S_{k_2}^{new}(N_2)$ 
be normalized non-CM Hecke eigenforms. 
Also let $N:= \text{lcm}[N_1,N_2]$. 
Then  
\begin{enumerate}
\item
for any $\epsilon>0$, 
$x\ge x_0(f,g,\epsilon)$ and $y\ge x^{7/17+\epsilon}$, 
we have 
$$
\#\{x<n\le x+y : (n,N)=1,~n \text{ is square-free and }
\lambda_f(n^m)\lambda_g(n^m)\neq 0\}
\gg_{f,g,m,\epsilon}y.
$$

\item
For any $\epsilon>0, x\ge x_0(f, g,\epsilon)$,  
$y\ge x^{17/38+\epsilon}$ and $1\le a\le q\le x^\epsilon$ 
with $(a,q)=1$, we have 
$$
\#\{x<n\le x+y : (n,N)=1,~n \text{ square-free, } n\equiv a (\text{mod } q) 
\text{ and } \lambda_f(n^m)\lambda_g(n^m)\neq 0\}
\gg_{f,g,\epsilon} y/q.
$$
\end{enumerate}
\end{rmk}

\bigskip
\noindent
{\bf Acknowledgments:} 
The authors would like to thank Jyoti Sengupta for
bringing to their notice the paper of Lau-Liu-Wu \cite{LLW}.
The authors would also like to thank Purusottam Rath for 
going through an earlier version of the paper.

\end{document}